\theoremstyle{plain}
\newtheorem{theorem}{Theorem}[section]
\newtheorem{lemma}[theorem]{Lemma}
\newtheorem{corollary}[theorem]{Corollary}
\newtheorem{conjecture}[theorem]{Conjecture}
\newtheorem*{theorem*}{Theorem}
\newtheorem*{claim*}{Claim}
\theoremstyle{definition}
\newtheorem{problem}[theorem]{Problem}
\newcommand{\R}{\mathbb{R}}
\newcommand{\Z}{\mathbb{Z}}
\newcommand{\Sym}{\mathfrak{S}}
\newcommand{\dist}{\mathrm{dist}}
\newcommand{\pt}{\mathrm{pt}}
\DeclareMathOperator{\conv}{\mathrm{conv}}
\begin{document}

\title[Affine Tverberg results without continuous generalization]{On affine Tverberg-type results without\\ continuous generalization}



\author{Florian Frick}
\address{Dept.\ Math., Cornell University, Ithaca, NY 14853, USA}
\email{ff238@cornell.edu}

\date{February 17, 2017}
\maketitle


\begin{abstract}
\small
Recent progress building on the groundbreaking work of Mabillard and Wagner has shown
that there are important differences between the affine and continuous theory for Tverberg-type results.
These results aim to describe the intersection pattern of convex hulls of point sets in Euclidean space and continuous relaxations thereof.
Here we give additional examples of an affine-continuous divide, but our deductions are almost elementary and
do not build on the technical work of Mabillard and Wagner. Moreover, these examples show a difference between the
affine and continuous theory even asymptotically for arbitrarily large complexes.
Along the way we settle the Tverberg 
admissible-prescribable problem (or AP conjecture) in the negative, give a new, short and elementary proof of the
balanced case of the AP conjecture which was recently proven by Joji\'c, Vre\'cica, and \v Zivaljevi\'c in a series of
two papers, provide examples of Tverberg-type results
that hold affinely but not continuously without divisibility conditions on the intersection multiplicity,
extend a result of Sober\'on, and show that this extension has a topological generalization if and
only if the intersection multiplicity is a prime. 
\end{abstract}

\section{Introduction}

A simplicial complex $K$ that admits a continuous embedding into~$\R^d$ is not necessarily embeddable into~$\R^d$ in a 
facewise affine way. Brehm~\cite{brehm1983} found a triangulation of the
M\"obius strip that cannot be affinely embedded into~$\R^3$.  Bokowski and Guedes de Oliveira~\cite{bokowski2000} showed
that there is a triangulation of the orientable surface of genus six that does not admit a facewise affine embedding into~$\R^3$.
Later Schewe~\cite{schewe2010} found a triangulation of the orientable genus-five surface that cannot be embedded into~$\R^3$
with affine faces.

Tverberg-type theory is a natural generalization of the theory of embeddings of simplicial complexes, where now one is
more generally interested in the intersection pattern of images of pairwise disjoint faces in a simplicial complex when mapped,
either affinely or continuously, to Euclidean space. In addition to $2$-fold intersections, as in the case of embeddings, one
also strives to understand $r$-fold intersections among pairwise disjoint faces. In this generalized setting one encounters
surprising phenomena: 
\begin{compactenum}[1.]
	\item we have to distinguish between affine and continuous maps even for simplices;
	\item and this distinction between the affine and continuous theory depends on divisibility properties of the intersection multiplicity.
\end{compactenum}
Tverberg~\cite{tverberg1966} showed that any affine map $\Delta_{(r-1)(d+1)} \longrightarrow \R^d$ from
a simplex of dimension~$(r-1)(d+1)$ to $\R^d$ has a point of $r$-fold coincidence among its pairwise disjoint faces.
The faces involved in this intersection form a \emph{Tverberg partition} for~$f$.
Tverberg's theorem remains true for continuous maps if $r$ is a power of a prime, see B\'ar\'any, Shlosman, and Sz\H ucs~\cite{barany1981}, and \"Ozaydin~\cite{ozaydin1987}, but
is false if $r$ has at least two distinct prime divisors, as was shown in~\cite{blagojevic2015, frick2015} building on work of Mabillard and Wagner~\cite{mabillard2015}.
Also see the recent survey by B\'ar\'any, Blagojevi\'c, and Ziegler~\cite{barany2016}.

Here we show -- for the first time -- that a qualitative difference between the affine and continuous theory for Tverberg-type 
results persists asymptotically, for arbitrarily large complexes and fixed dimension. Our deductions do not depend on the technical work of 
Mabillard and Wagner, but are significantly more elementary. We exhibit an affine-continuous dichotomy for any intersection
multiplicity $r \ge 3$, independent of divisibility properties of~$r$, and our examples are low-dimensional: we construct maps
to~$\R^3$.

The Tverberg admissible-prescribable problem aims to characterize which dimensions must occur for Tverberg partitions
for any continuous map $f\colon \Delta \longrightarrow \R^d$ from a sufficiently large simplex~$\Delta$. The AP conjecture,
popularized by \v Zivaljevi\'c in the most recent edition of the \emph{Handbook of Discrete and Computational Geometry}~\cite[Section 21.4.3]{zivaljevic2017},
asserts that for $r$ a prime power these dimensions are given by the two conditions  $\sum d_i \ge (r-1)d$ and $d \ge d_i \ge \lfloor \frac{d}{2} \rfloor$.
Outside of the prime power case the constructions of counterexamples to the topological Tverberg conjecture already imply that
the AP conjecture must fail as well.
One consequence of the constructions of the present manuscript is that the AP conjecture fails for all intersection multiplicities~${r \ge 3}$; 
see Corollary~\ref{cor:prescr}:

\begin{theorem*}
	The AP conjecture fails for any~$r \ge 3$, already in dimension $d = 3$.
\end{theorem*}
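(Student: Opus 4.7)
The plan is to disprove the AP conjecture in dimension $d = 3$ by producing, for each $r \ge 3$, an affine map $f\colon \Delta_N \longrightarrow \R^3$ from a sufficiently large simplex whose every Tverberg $r$-partition contains at least one singleton block. Classical Tverberg guarantees that Tverberg $r$-partitions do exist whenever $N \ge (r-1)(d+1) = 4(r-1)$; the crux is that the ones that exist are all degenerate. Since $\lfloor 3/2 \rfloor = 1$, a singleton block contributes a face of dimension $d_i = 0$ outside the AP-predicted range $[1,3]$, so no dimension sequence satisfying the AP constraints is prescribable for $f$, and the conjecture fails.

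First, I would invoke the point configurations constructed earlier in the manuscript to build $f$. The essential property we need is that for the chosen placement of the $N+1 = 4r - 3$ (or more) image points in $\R^3$, no $r$ pairwise disjoint subsets of cardinality at least two simultaneously have a common point in their convex hulls. This is exactly the geometric condition engineered in the prior sections; it is natural to set things up so that a distinguished vertex is mapped to a designated coincidence point and the remaining vertices are placed to supply at least one Tverberg partition while obstructing all balanced ones.

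Once such an $f$ is in hand, the corollary is immediate by contradiction: any Tverberg $r$-partition $V_1, \ldots, V_r$ with every $|V_i| \ge 2$ would produce a forbidden $r$-fold coincidence of balanced convex hulls, contradicting the chosen configuration. Hence every Tverberg partition of $f$ has a singleton block, giving some $d_i = 0$ and immediately refuting the lower bound in the AP conjecture for $d = 3$.

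The real work lies not in this brief deduction but in producing, in $\R^3$ and for arbitrary $r \ge 3$, a point configuration of $4r - 3$ (or more) images that simultaneously blocks all balanced $r$-fold convex-hull intersections while still admitting at least one (necessarily unbalanced) $r$-fold coincidence. This is precisely the construction the body of the manuscript is built around, and its verification is the main obstacle; once it is in place, the present theorem is a short reinterpretation in the language of the admissible-prescribable problem.
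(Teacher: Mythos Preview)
Your approach has a fundamental gap: you propose to build an \emph{affine} map $f\colon \Delta_N \to \R^3$ (equivalently, a point configuration in $\R^3$) whose every Tverberg $r$-partition contains a singleton. But for sufficiently large $N$ such configurations do not exist. This is exactly the content of Theorem~\ref{thm:3-dim-prescr}: the affine AP conjecture \emph{holds} in dimension $d=3$, so for large enough $N$ every point set of size $N+1$ in $\R^3$ admits a Tverberg $r$-partition with part sizes $2,3,4,\dots,4$ (dimensions $1,2,3,\dots,3$), and hence with no singleton. The ``point configurations constructed earlier in the manuscript'' that you invoke are not constructed anywhere in the paper, and the paper in fact proves they cannot be constructed.

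The paper's actual argument works precisely by leaving the affine world. The key construction is the continuous map of Theorem~\ref{thm:prescribable}: take an affine strong-general-position map $g\colon \Delta_N \to \R^{2}$ and set
\[
f(x) = \bigl(g(x),\ \dist(x,\Delta_N^{(1)})\bigr) \in \R^3.
\]
If $f$ had a Tverberg $r$-partition with every face of dimension $\ge 1$, then forcing equality of the last coordinate pushes all $r$ preimage points into the $1$-skeleton, and strong general position of $g$ in $\R^2$ forces $r\cdot(2-1)\le 2$, impossible for $r\ge 3$. Hence the admissible $r$-tuple $(1,2,3,\dots,3)$ is not Tverberg prescribable. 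The entire point of the paper is this dichotomy: the counterexample must be genuinely continuous (here, piecewise-affine only on the barycentric subdivision), because the affine version of the conjecture is true in $\R^3$.
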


The AP conjecture holds for $r=2$; see~\cite[Thm.~6.8]{blagojevic2015}. If we require the map $f\colon \Delta \longrightarrow \R^d$
above to be affine, we get an affine version of the AP conjecture, where we do not need to require that $r$ be a power of a prime.
Unlike its continuous relaxation we can show that this affine AP conjecture is true in~$\R^3$; see Theorem~\ref{thm:3-dim-prescr}:

\begin{theorem*}
	The affine AP conjecture holds in dimension $d \le 3$.
\end{theorem*}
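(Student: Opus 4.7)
The plan is to argue case-by-case on $d \in \{1, 2, 3\}$, reducing to general position and then exhibiting a Tverberg $r$-partition with the prescribed dimension vector $(d_1, \ldots, d_r)$ by direct construction. First I would pass to the case where the images of the $N + 1$ vertices of $\Delta_N$ under the affine map $f$ are in general position in $\R^d$: this is harmless because the Tverberg property with prescribed part dimensions is a closed condition, so any affine map achieving the conclusion generically forces it in the limit. For $d = 1$ the admissible dimension vectors are $(1, 1, \ldots, 1)$ and $(0, 1, 1, \ldots, 1)$ up to reordering, and these are realized on any sorted point set by splitting around the median.

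For $d \in \{2, 3\}$ my strategy exploits the constraint $\sum d_i \ge (r - 1) d$ together with $d_i \le d$, which forces the total dimension deficit $\sum_i (d - d_i) \le d$. Writing $a, b$ for the number of parts with $d_i = 1, 2$ and $c = r - a - b$ for the number with $d_i = d$, this forces $a \le 2$ when $d = 2$ and $2a + b \le 3$ when $d = 3$; in particular, the number of \emph{low-dimensional} parts is bounded by a constant independent of $r$. I would then choose a generic witness point $p \in \R^d$ and build the partition in two phases. In the first phase I select pairwise vertex-disjoint low-dimensional seeds $A_i$ with $|A_i| = d_i + 1$ for each $i$ with $d_i < d$, arranged so that the affine hull of $f(A_i)$ passes through $p$. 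In the second phase I partition the remaining vertices into $c$ full-dimensional simplices each containing $p$.

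For the first phase, since the number of low-dimensional seeds is bounded and $p$ can be chosen generically, the prescribed affine flats (lines through $p$ for dim-$1$ parts, planes through $p$ for dim-$2$ parts in $\R^3$) are selected in mutual general position; I would then argue by an incidence and pigeonhole count that for $N$ large the required vertex-disjoint seeds exist on these flats. For the second phase, taking $p$ to be a centerpoint of the remaining vertex set permits an iterative peeling: the affine Tverberg theorem for $r = 2$ in $\R^d$ extracts one $d$-simplex containing $p$ at a time, and the centerpoint property ensures enough points remain after each peel to continue.

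The main obstacle is the simultaneous realization of the low-dimensional seeds in the first phase: one needs the affine flats through them to meet in exactly $p$ (not in a higher-dimensional intersection) and the supports to be vertex-disjoint. For $d = 3$, the tightest case is $2a + b = 3$ (one edge and one triangle, or three triangles), where the prescribed flats have total codimension exactly $3$ and generically meet in a single point, so the construction just barely succeeds. This reliance on the bound $\sum (d - d_i) \le d$ is exactly why the argument is confined to $d \le 3$: in $\R^d$ for $d \ge 4$ the analogous inequality permits a number of low-dimensional seeds growing with $r$, and the incidence argument breaks down.
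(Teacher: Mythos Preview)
Your Phase~1 does not work as stated. You propose to choose a generic point $p\in\R^d$ and then find low-dimensional seeds $A_i$ among the given vertex images with $p$ in the affine hull of $f(A_i)$. But once the vertex images are in general position and $p$ is generic, \emph{no} such seeds exist: the finitely many lines through pairs of image points and the finitely many planes through triples of image points form a measure-zero subset of~$\R^3$, so a generic $p$ lies on none of them. There is no incidence or pigeonhole argument that produces, say, a segment through $p$ when no segment through $p$ exists. If instead you let $p$ be determined by the data---for instance as the intersection of the line through one chosen pair with the plane through one chosen triple---then $p$ is no longer generic, and you have given no reason why this particular $p$ should be a centerpoint (or anything like it) for the remaining points. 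So Phase~1 and Phase~2 impose incompatible demands on~$p$: Phase~1 forces $p$ onto a measure-zero set dictated by the data, while Phase~2 wants $p$ to be a deep point of the residual set, and nothing in your outline reconciles these.

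This is exactly the difficulty that makes the $(1,2,3,\dots,3)$ case in $\R^3$ nontrivial. The paper does not attempt a direct construction here; it invokes the result of Bukh, Loh, and Nivasch that every colorful partition of sizes $2,3,4,\dots,4$ is unavoidable in sufficiently long generic sequences in~$\R^3$, and then handles the balanced tuple $(2,2,2,3,\dots,3)$ separately via the topological machinery of Theorem~\ref{thm:balanced}. A smaller point: your limiting argument from general position is not quite the closed condition you claim, since in the limit the common point may drop to a lower-dimensional face; the paper addresses this by observing one can pad such a face with extra vertices. Also, for $d=1$ the only admissible tuple has $\sum d_i = r-1$, so $(1,\dots,1)$ is not admissible.
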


Here we collect additional results of this manuscript and where to find them:
\begin{compactitem}
	\item We give a new short and elementary proof of the balanced case of the AP conjecture, which was recently 
	proven in a sequence of two papers by Joji\'c, Vre\'cica, and \v Zivaljevi\'c~\cite{jojic2017, jojic2015}; see Theorem~\ref{thm:balanced}.
	This solves a problem raised in~\cite{jojic2017} on whether the balanced AP Conjecture can be proven by ``indirect methods''.
	\item We show that the dimensions that must occur for Tverberg partitions for any continuous map $f\colon \Delta \longrightarrow \R^3$
	from a sufficiently large simplex~$\Delta$ are different from those dimensions that must occur for affine maps; see Corollary~\ref{cor:prescr}.
	A class of examples of dimensions that must occur for affine maps was recently found by Bukh, Loh, and Nivasch~\cite{bukh2016}.
	\item In fact, Bukh, Loh, and Nivasch prove special cases of a conjecture (Conjecture~\ref{conj}) that gives an asymptotic characterization of 
	all affine Tverberg partitions based on order types. P\'or~\cite{por2016} announced a proof of the full conjecture, and this
	implies the affine version of the AP conjecture in full generality; see Theorem~\ref{thm:conj-implies-prescr}. 
	Thus the topological analog of Conjecture~\ref{conj} must fail; see Corollary~\ref{cor:prescr}.
	\item We use our geometric constructions to show that certain symmetric multiple chessboard complexes are not shellable; see Theorem~\ref{thm:nonshellable}.
	In particular, geometric constructions of maps to~$\R^d$ can be used as obstructions to the shellability of symmetric simplicial complexes.
	\item We extend a result of Sober\'on~\cite{soberon2015} on Tverberg partitions where the points of coincidence have equal barycentric coordinates
	to an orbit-collapsing result; see Theorem~\ref{thm:barycentric}. This solves a problem raised by Sober\'on~\cite[Section~4]{soberon2015}.
	\item We remark that this orbit-collapsing result has a topological analog if the intersection multiplicity $r$ is a prime -- see Theorem~\ref{thm:top-orbit} -- and
	fails otherwise; see Theorem~\ref{thm:top-orbit-fails}.
\end{compactitem}

\noindent
\textbf{Acknowledgements.}
I would like to thank Boris Bukh, Albert Haase, Attila P\'or, and Uli Wagner for helpful discussions.

\section{The Tverberg admissible-prescribable problem}

Given $2d+3$ points in~$\R^{2d}$ there are two disjoint subsets of at most $d+1$ points each such that their
convex hulls intersect. The original proofs of this result due to Van Kampen~\cite{vanKampen1933} and independently Flores~\cite{flores1933} are topological and lead to the following
topological generalization: let $f\colon \Delta_{2d+2} \longrightarrow \R^{2d}$ be continuous; then there are
two disjoint $d$-dimensional faces $\sigma_1$ and $\sigma_2$ of~$\Delta_{2d+2}$ such that $f(\sigma_1) \cap f(\sigma_2) \ne \emptyset$. 
The case $d=1$ gives that the complete graph $K_5$ is nonplanar, and is actually equivalent to it because of the Hanani--Tutte theorem.
This higher-dimensional analog of the nonplanarity of~$K_5$ has a generalization to $r$-fold intersections.
Perhaps surprisingly, this generalization holds if and only if $r$ is a power of a prime. More precisely, Sarkaria~\cite{sarkaria1991} for primes $r$
and Volovikov~\cite{volovikov1996} in the general case that $r$ is a power of a prime proved that for any continuous map $f\colon \Delta_{(r-1)(rd+2)} \longrightarrow \R^{rd}$
there are $r$ pairwise disjoint $(r-1)d$-dimensional faces $\sigma_1, \dots, \sigma_r$ of $\Delta_{(r-1)(rd+2)}$ such that
$f(\sigma_1) \cap \dots \cap f(\sigma_r) \ne \emptyset$. That this result fails for any $r$ with at least two distinct prime divisors
follows by combining the work of Mabillard and Wagner~\cite{mabillard2015} with \"Ozaydin's~\cite{ozaydin1987}; see~\cite{blagojevic2015}. 
The affine version of this result, that is, the statement that for any $(r-1)(rd+2)+1$ points $X \subseteq \R^{rd}$ there are pairwise disjoint subsets 
$X_1, \dots, X_r \subseteq X$ with $|X_i| = (r-1)d+1$ such that $\conv X_1 \cap \dots \cap \conv X_r \ne \emptyset$, follows from the topological version for $r$ a power of
a prime but is open for other intersection multiplicities~$r$.

A natural question is which dimensions can be prescribed for the intersecting faces.
Let $r\ge 2$ and $d \ge 1$ be integers and let $0 \le d_1 \le d_2 \le \dots \le d_r$ be a non-decreasing $r$-tuple 
of integers. We call $(d_1, \dots, d_r)$ \emph{Tverberg prescribable} for $r$ and $d$ if there is an $N$ such that for
every continuous map $f\colon \Delta_N \longrightarrow \R^d$ there are $r$ pairwise
disjoint faces $\sigma_1, \dots, \sigma_r$ of~$\Delta_N$ with $f(\sigma_1) \cap \dots \cap f(\sigma_r) \ne \emptyset$
and $\dim \sigma_i = d_i$ for all~$i$. We call $(d_1, \dots, d_r)$ \emph{affinely Tverberg prescribable} for $r$ and $d$
if there is an $N$ such that for every affine map $f\colon \Delta_N \longrightarrow \R^d$ there are $r$ pairwise
disjoint faces $\sigma_1, \dots, \sigma_r$ of~$\Delta_N$ with $f(\sigma_1) \cap \dots \cap f(\sigma_r) \ne \emptyset$
and $\dim \sigma_i = d_i$ for all~$i$. Equivalently, every sufficiently large point set in~$\R^d$ has $r$ pairwise disjoint
subsets $X_1, \dots, X_r$ with $|X_i| = d_i +1$ and such that $\conv X_1 \cap \dots \cap \conv X_r \ne \emptyset$.

Clearly every Tverberg prescribable sequence is affinely Tverberg prescribable as well. Here we will show that the 
converse does not hold: for every $r \ge 3$ there is a sequence of nonnegative integers that is affinely Tverberg
prescribable but we cannot force faces of these dimensions to intersect for general continuous maps.

For $(d_1, \dots, d_r)$ to be Tverberg prescribable, the $d_i$ have to be large. This can be seen by placing an
arbitrarily large point set on the moment curve $\gamma(t) = (t,t^2, \dots, t^d)$ in~$\R^d$. Then every set of size 
at most $\lfloor\frac{d}{2}\rfloor$ has its convex hull on
the boundary of the convex hull of the entire point set (which is a cyclic polytope) and thus does not intersect
the convex hull of all other points. In particular, there is no $r$-fold Tverberg point. We conclude that in order to be
Tverberg prescribable the $d_i$ must be at least~$\lfloor\frac{d}{2}\rfloor$. We call an $r$-tuple of integers $(d_1, \dots, d_r)$
with $\sum_i d_i = (r-1)d$ and $\lfloor\frac{d}{2}\rfloor \le d_1 \le d_2 \le \dots \le d_r \le d$ \emph{Tverberg admissible}.
The Tverberg admissible-prescribable problem asks whether every Tverberg admissible $r$-tuple is also
Tverberg prescribable. This would completely characterize the dimensions of faces that must occur asymptotically
in Tverberg partitions.

\begin{conjecture}[AP conjecture~{\cite{blagojevic2014}, \cite[Problem 21.4.12]{zivaljevic2017}}]
\label{conj:AP}
	Every Tverberg admissible $r$-tuple is Tverberg prescribable for $r$ a power of a prime:
	if the $r$ integers $d_i$ with $\lfloor\frac{d}{2}\rfloor \le d_i \le d$ satisfy $\sum_i d_i = (r-1)d$ then there
	is a sufficiently large simplex~$\Delta$ such that any continuous map $f \colon \Delta \longrightarrow \R^d$
	identifies $r$ points from $r$ pairwise disjoint faces~$\sigma_1, \dots, \sigma_r$ of~$\Delta$ with
	$\dim \sigma_i = d_i$.
\end{conjecture}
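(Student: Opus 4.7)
The natural plan is to apply the constraint method of Blagojevi\'c--Frick--Ziegler to the topological Tverberg theorem for prime powers, which is available from the introduction. Given a Tverberg admissible $r$-tuple $(d_1,\dots,d_r)$, set $k_i = d-d_i \ge 0$ so that $\sum_i k_i = d$. For a continuous map $f\colon \Delta_N \longrightarrow \R^d$ with $N$ sufficiently large, the idea is to produce a continuous extension $F = (f, g_1, \dots, g_m)\colon \Delta_N \longrightarrow \R^{d+m}$ by adding constraint coordinates so that a topological Tverberg partition of $F$ in $\R^{d+m}$ projects to a Tverberg partition of $f$ whose multiset of face sizes equals $\{d_1+1,\dots,d_r+1\}$.

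A standard realization is to take each $g_j$ of the form $x \mapsto \dist(x, L_j)$ for a carefully chosen subcomplex $L_j \subset \Delta_N$. At an $r$-fold coincidence of $F$ the values $g_j(x_i)$ must agree for $x_i \in \sigma_i$, which forces combinatorial rigidity on how the faces $\sigma_i$ can meet $L_j$. To cap $|\sigma_i|$ from above one takes the $L_j$ to be chessboard-type or skeleton subcomplexes whose avoidance bounds the individual block sizes; together with $N$ chosen just large enough that the total $\sum_i |\sigma_i|$ is forced from below, a careful accounting should leave the prescribed multiset as the only possibility.

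Equivariantly this reduces to showing that no $\Sym_r$-equivariant map exists from a chessboard-type configuration space $C$, parameterizing $r$-tuples of pairwise disjoint faces of the prescribed sizes in $\Delta_N$, to the unit sphere of the reduced standard representation $W_r = \{x \in (\R^d)^r : \sum_i x_i = 0\}$. For $r = p^s$ a prime power one restricts the group action to the elementary abelian $p$-subgroup, whose equivariant cohomology is a polynomial ring, and tries to show nonvanishing of a Fadell--Husseini or Volovikov index via a Serre spectral sequence for the Borel construction.

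The main obstacle is the asymmetry of the dimension vector. In the balanced case $d_1 = \cdots = d_r$ the complex $C$ carries the full $\Sym_r$-action and its shellability (the route of Joji\'c--Vre\'cica--\v Zivalje\-vi\'c, and of Theorem~\ref{thm:balanced}) is enough. When the $d_i$ are unequal the symmetry group shrinks to the stabilizer of the partition type, the relevant chessboard complex is typically not shellable nor sufficiently highly connected, and computing the equivariant index becomes genuinely more delicate. Overcoming this — for instance by finding a replacement constraint complex that restores enough symmetry, or by a refined spectral sequence computation using only the cyclic $\Z/p^s$ action — is where I expect the real work of any proof to lie.
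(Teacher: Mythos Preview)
The statement you are trying to prove is a \emph{conjecture}, and the paper does not prove it --- it \emph{refutes} it. Corollary~\ref{cor:prescr} shows that Conjecture~\ref{conj:AP} fails for every $r\ge 3$, already in dimension $d=3$. So no proof strategy can succeed, and in particular the constraint-method plan you outline cannot work in general.

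In fact the very machinery you propose is what the paper uses to \emph{disprove} the conjecture. Adding a single constraint coordinate $g(x)=\dist(x,\Delta_N^{(d_1)})$ to a generic affine map $g\colon\Delta_N\to\R^{d-1}$ and applying the strong general position count (Theorem~\ref{thm:prescribable}) forces $d_1\ge\frac{r-1}{r}(d-1)$ for any Tverberg prescribable tuple. For $d=3$ and $r\ge 3$ this bound exceeds~$1$, so the admissible tuple $(1,2,3,\dots,3)$ is not prescribable. The obstacle you flag at the end --- that for unbalanced $(d_1,\dots,d_r)$ the symmetry drops and the relevant chessboard complex fails to be sufficiently connected --- is not a technical nuisance to be overcome but a genuine obstruction: Theorem~\ref{thm:nonshellable} shows these complexes are provably not $m$-connected for the relevant~$m$, precisely because the equivariant map you would need to rule out actually exists. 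Your plan does go through in the balanced case, and that is exactly Theorem~\ref{thm:balanced}.
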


In~\cite{zivaljevic2017} and in~\cite{blagojevic2014} this problem is stated for any~$r$, but the counterexamples to the topological
Tverberg conjecture show the need to add the condition that $r$ be a power of a prime. 
The Van Kampen--Flores theorem asserts that the AP conjecture holds if $r = 2$ and the dimension $d$ is even. The same is true for the
odd-dimensional case; see~\cite[Thm.~6.8]{blagojevic2015}. Joji\'c, Vre\'cica, and \v Zivaljevi\'c describe the AP conjecture
as a ``very interesting problem''; see~\cite[Section 1.3]{jojic2015}.
We will construct counterexamples to the AP conjecture for any $r \ge 3$.
We can formulate an affine version of Conjecture~\ref{conj:AP} for any~$r$:

\begin{conjecture}[affine AP conjecture]
\label{conj:affine-AP}
	Every Tverberg admissible $r$-tuple is affinely Tverberg prescribable:
	if the $r$ integers $d_i$ with $\lfloor\frac{d}{2}\rfloor \le d_i \le d$ satisfy $\sum_i d_i = (r-1)d$ then in any 
	sufficiently large finite point set $X \subseteq \R^d$ there are pairwise disjoint subsets $X_1, \dots, X_r \subseteq X$
	such that $\conv X_1 \cap \dots \cap \conv X_r \ne \emptyset$ and $|X_i| = d_i +1$.
\end{conjecture}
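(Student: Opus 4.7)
The plan is to deduce the affine AP conjecture from Conjecture~\ref{conj} --- the order-type characterization of asymptotic affine Tverberg partitions, announced in full by P\'or --- thereby reducing the statement to a combinatorial verification on the moment curve. By Conjecture~\ref{conj}, whether a dimension-tuple $(d_1, \dots, d_r)$ is affinely Tverberg prescribable is governed by the order types appearing in large point sets, and by the usual Ramsey-type argument any sufficiently large finite $X \subseteq \R^d$ contains a subset with the same order type as $N$ points on the moment curve $\gamma(t) = (t, t^2, \dots, t^d)$. Since convex-hull intersection patterns depend only on the order type, it suffices to produce the desired partition for $N$ points on the moment curve.

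The combinatorial problem on the moment curve is to find pairwise disjoint subsets $X_1, \dots, X_r \subseteq \{p_1, \dots, p_N\}$ with $|X_i| = d_i + 1$ whose convex hulls meet. I would attack this using Gale evenness: the lower bound $d_i \ge \lfloor d/2 \rfloor$ is exactly what allows $\conv X_i$ to meet the interior of the cyclic polytope $C(N,d)$ rather than sit on a proper face, and the equality $\sum_i d_i = (r-1)d$ matches the parameter count, since the common-point equations $\sum_{v \in X_i} \lambda_{i,v}\, v = \sum_{v \in X_j} \lambda_{j,v}\, v$ together with the normalizations $\sum_v \lambda_{i,v} = 1$ form a square linear system. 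The natural construction interleaves the $N$ indices into blocks and assigns them to the $X_i$ in a cyclic pattern tuned so that every complement $[N] \setminus X_i$ satisfies Gale evenness; this yields a positive barycentric solution and hence a common point.

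The main obstacle is this combinatorial step for unbalanced admissible tuples. The balanced case, with all $d_i$ as equal as possible, admits a uniform interleaving and is handled by Joji\'c--Vre\'cica--\v Zivaljevi\'c (and is reproved elementarily as Theorem~\ref{thm:balanced} here). For general $(d_1, \dots, d_r)$, however, the block assignment is delicate --- one must place the gaps in the complements across different $X_i$'s consistently while preserving Gale evenness for each --- and carrying out this explicit combinatorial design is precisely the technical core that P\'or's announced proof of Conjecture~\ref{conj} supplies. An inductive alternative, peeling off a Radon-type partition for the largest block and recursing in a lower-dimensional configuration, is tempting but runs into the difficulty that the induced tuple need not remain Tverberg admissible, so the \emph{admissibility} constraint must be rebalanced at each step --- which appears to be as hard as the original statement.
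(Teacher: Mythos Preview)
You and the paper both deduce Conjecture~\ref{conj:affine-AP} from Conjecture~\ref{conj}, so the high-level strategy matches the paper's Theorem~\ref{thm:conj-implies-prescr}. But you misread what Conjecture~\ref{conj} hands you, and this leads you to declare the wrong step hard and leave it unfinished.

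Conjecture~\ref{conj} already asserts that every \emph{colorful} partition of $\{1,\dots,(r-1)(d+1)+1\}$ occurs as a Tverberg partition in any sufficiently long generic sequence. So once you assume it, no Ramsey reduction to the moment curve and no Gale-evenness analysis is needed: the only remaining task is to exhibit, for each Tverberg admissible $(d_1,\dots,d_r)$, a colorful partition whose parts have sizes $d_1+1,\dots,d_r+1$. The paper does this in a few lines for \emph{every} admissible tuple, balanced or not: split $\{1,\dots,d\}$ into sets $A_1,\dots,A_r$ with $|A_i|=d-d_i$ and no two consecutive integers in the same $A_i$ (assign the odd integers in order, then the even ones), and let $A_i$ record which of the shared indices $(r-1)k+1$ go into~$X_i$; the remaining indices then fill in uniquely. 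The admissibility bound $d_i\ge\lfloor d/2\rfloor$ is exactly what makes the no-consecutive condition achievable.

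Your ``main obstacle'' --- the interleaving construction on the moment curve for unbalanced tuples --- is therefore not an obstacle at all, and it is not what P\'or supplies. On the stretched moment curve the Tverberg partitions are precisely the colorful ones (the Bukh--Loh--Nivasch observation cited just before Conjecture~\ref{conj}), so your moment-curve problem \emph{is} the colorful-partition construction above. What P\'or actually contributes is the far harder statement that these colorful partitions occur in every large generic point set, not only on the moment curve --- the step your Ramsey invocation glosses over as ``usual''.
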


The AP conjecture and its affine relative have recently received the attention of several authors:
Before the conjecture was formulated Sarkaria~\cite{sarkaria1991} and Volovikov~\cite{volovikov1996} showed the AP Conjecture holds for Tverberg admissible
$r$-tuples $(d_1, \dots, d_r)$ with $d_1 = d_2 = \dots = d_r$. Slight extensions of this are contained in~\cite{blagojevic2014}.
Joji\'c, Vre\'cica, and \v Zivaljevi\'c~\cite{jojic2017, jojic2015} proved the balanced case of the AP conjecture in a series of two technical papers; here a Tverberg 
admissible $r$-tuple $(d_1, \dots, d_r)$ is \emph{balanced} if $|d_i-d_j| \le 1$ for all $i$ and~$j$. We will give a new and simple proof of 
the balanced case of the AP Conjecture; see Theorem~\ref{thm:balanced}. The affine AP conjecture was implicitly treated by Bukh, Loh,
and Nivasch~\cite{bukh2016}. 
As a corollary we will obtain here that Conjecture~\ref{conj:affine-AP} holds in dimensions $d \le 3$; see Theorem~\ref{thm:3-dim-prescr}.
A result recently announced
by P\'or would imply the affine AP conjecture in full generality; see Theorem~\ref{thm:conj-implies-prescr}.

We found the conditions that make an $r$-tuple $(d_1, \dots, d_r)$ Tverberg admissible by placing points on the moment curve.
In order to extend this argument we need a complete understanding of $r$-fold intersections of convex hulls of 
point sets on the moment curve. The intersection combinatorics of point sets on the stretched moment curve are completely
understood; see Bukh, Loh, and Nivasch~\cite{bukh2016} (according to the authors of~\cite{bukh2016} this was independently 
observed by B\'ar\'any and P\'or as well as Mabillard and Wagner): the Tverberg partitions that occur on the stretched moment
curve are precisely the colorful ones. Here a partition of $\{1,2, \dots, (r-1)(d+1)+1\}$ into $r$ parts $X_1, \dots, X_r$ is called
\emph{colorful} if for each $1 \le k \le d+1$ the set $Y_k = \{(r-1)(k-1)+1, \dots, (r-1)k+1\}$ satisfies $|Y_k \cap X_i| = 1$ for all~$i$.
We say that a partition $X_1 \sqcup \dots \sqcup X_r$ of $\{1,2, \dots, (r-1)(d+1)+1\}$ \emph{occurs} as a Tverberg partition in
a sequence $x_1, \dots, x_N$ of points in~$\R^d$ if there is a subsequence $x_{i_1}, \dots, x_{i_n}$ of length $n = (r-1)(d+1)+1$
such that the sets $\conv\{x_{i_k} \: | \: k \in X_j\}$ all share a common point. It has been conjectured that asymptotically
only colorful Tverberg partitions must occur.

\begin{conjecture}[Bukh, Loh, and Nivasch~{\cite[Conjecture 1.3]{bukh2016}}]
\label{conj}
	Any colorful partition of the index set $\{1,2, \dots, {(r-1)(d+1)+1}\}$ into $r$ parts occurs as a Tverberg partition in any
	sufficiently long sequence of generic points in~$\R^d$. For all other partitions there are arbitrarily large point sets
	where the partition does not occur as a Tverberg partition.
\end{conjecture}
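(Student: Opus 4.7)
The plan is to handle the two assertions of Conjecture~\ref{conj} separately.

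For the negative direction, I would exhibit explicit counterexamples using a sufficiently stretched moment curve. Place $N$ points $\gamma(t_1), \dots, \gamma(t_N)$ with $\gamma(t) = (t, t^2, \ldots, t^d)$ in $\R^d$, taking $t_{i+1} = t_i^{C}$ for a large constant $C = C(r,d)$. A direct Vandermonde/determinantal computation---or equivalently Gale's evenness criterion applied to the cyclic polytope spanned by any $n = (r-1)(d+1)+1$ of these points---shows that the only Tverberg partitions realized on any $n$-subset are the colorful ones. Hence no non-colorful partition of $\{1,\dots,n\}$ occurs in the configuration, for arbitrarily large~$N$. This is precisely the computation already recorded in~\cite{bukh2016}.

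For the positive direction, the strategy is to reduce to the moment curve by an order-type extraction. First, I would verify directly that on $n$ consecutive points of the stretched moment curve \emph{every} colorful partition of $\{1,\dots,n\}$ is realized as a Tverberg partition; this is a routine calculation using the block-Vandermonde structure, since for a colorful partition $X_1 \sqcup \dots \sqcup X_r$ each color block $Y_k = \{(r-1)(k-1)+1, \dots, (r-1)k+1\}$ contributes exactly one index to each $X_j$, so the common intersection point can be located block-by-block. Second, I would invoke a Ramsey-type statement to locate, inside any sufficiently long generic sequence $x_1,\dots,x_N$ in $\R^d$, an increasing subsequence $x_{i_1},\dots,x_{i_n}$ whose order type matches that of $n$ consecutive moment-curve points. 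Because $k \mapsto i_k$ is order-preserving, the colorful blocks $Y_k$ transfer verbatim, so each colorful partition of $\{1,\dots,n\}$ realized on the moment-curve model translates into the same partition realized on $x_{i_1},\dots,x_{i_n}$, which by definition is its occurrence as a Tverberg partition in $x_1,\dots,x_N$.

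The hard part will be the extraction. In dimension $d \le 3$ it can be carried out by iterating Erd\H os--Szekeres on successive coordinate projections, which is what makes the corresponding affine AP statement accessible in Theorem~\ref{thm:3-dim-prescr}. In general dimension the required assertion is that the stretched-moment-curve order type is \emph{unavoidable} in any sufficiently long generic point set in $\R^d$; the same-type lemma of B\'ar\'any--Valtr produces same-type subfamilies only of bounded size and is therefore not by itself enough, since $n$ grows with $r$ and $d$. Supplying the missing ingredient---a quantitative, large-size order-type Ramsey theorem specific to the moment curve---is exactly the content of the announcement of P\'or~\cite{por2016}, and is the principal obstacle to a complete proof of Conjecture~\ref{conj}.
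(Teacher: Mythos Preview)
The statement you are addressing is Conjecture~\ref{conj}; the paper does not prove it. It is quoted from Bukh, Loh, and Nivasch, and the paper only records that P\'or has \emph{announced} a proof. There is therefore no proof in the paper to compare against, and you yourself concede at the end of your proposal that the ``principal obstacle'' is precisely the content of P\'or's announcement. So what you have written is, by your own framing, a strategy rather than a proof.

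Beyond that, your identification of where the difficulty sits is off. Extracting from a long generic sequence in~$\R^d$ a subsequence whose order type agrees with that of points on the moment curve is \emph{not} the hard step: the moment-curve order type is simply ``every $(d{+}1)$-tuple has the same orientation,'' and a single application of hypergraph Ramsey to the $2$-coloring of $(d{+}1)$-subsets by orientation already yields such a subsequence of any prescribed length. No appeal to the same-type lemma, or to anything announced by P\'or, is needed for that.

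The actual gap is the inference you make immediately afterwards, that a colorful partition which is Tverberg on the stretched moment curve ``translates'' to the extracted subsequence because the order types agree. For $r\ge 3$ this is not justified: being a Tverberg partition is \emph{not} an invariant of the order type. Already for $r=3$, $d=2$ and a partition $X_1=\{a,b\}$, $X_2=\{c,d\}$, $X_3=\{e,f,g\}$, the Tverberg point is $q=\mathrm{line}(a,b)\cap\mathrm{line}(c,d)$, and whether $q$ lies on the $g$-side of the line through $e,f$ is governed by the sign of
\[
[a,b,d]\,[c,e,f]\;-\;[a,b,c]\,[d,e,f],
\]
which depends on the \emph{magnitudes} of these bracket determinants, not merely their signs. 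Two configurations with identical chirotope can therefore disagree on whether this particular partition is Tverberg. This is exactly why Bukh, Loh, and Nivasch need the \emph{stretched} moment curve and not merely the cyclic order type, and it is why the positive direction of Conjecture~\ref{conj} does not reduce to an order-type Ramsey statement.
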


We will show that a topological generalization of this conjecture fails; see Corollary~\ref{cor:prescr}.
P\'or~\cite{por2016} recently announced a proof of this conjecture. Here a point set is generic if it is not in the zero set
of a finite family of polynomials that express certain geometric predicates; see~\cite{bukh2016} for details. It will only be important for us that these generic
point sets are dense in the space of all point sets. Thus by a standard limiting argument it suffices to prove results for 
these generic point sets. 
Bukh, Loh, and Nivasch settle certain special cases of Conjecture~\ref{conj}:

\begin{theorem}[Bukh, Loh, and Nivasch~{\cite[Theorem 1.4]{bukh2016}}]
\label{thm:dim-3}
	Conjecture~\ref{conj} holds for $d \le 2$ and all~$r$. It also holds for $d = 3$ for all partitions that have sizes $2,3,4,\dots, 4$.
\end{theorem}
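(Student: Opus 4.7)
The plan is to handle the two directions of Conjecture~\ref{conj} separately. The negative direction --- that any non-colorful partition can be avoided by arbitrarily large point sets --- can be treated uniformly for all $d$: place $N$ points on a sufficiently stretched moment curve $\gamma_M(t)=(t,Mt^2,\dots,M^{d-1}t^d)$, and show by a Gale-evenness-style combinatorial calculation, or by a limiting argument as $M\to\infty$, that among any subsequence of length $(r-1)(d+1)+1$ the only realized Tverberg partitions are the colorful ones. This yields the second half of the conjecture for every $d$, and in particular for all cases treated here.

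For the positive direction in $d=1$ the argument is direct: for any $2r-1$ ordered real points, a short pigeonhole argument forces the Tverberg point to be the median $p_r$, and every colorful partition (the singleton $\{p_r\}$ together with an arbitrary pairing of the lower half $\{p_1,\dots,p_{r-1}\}$ with the upper half $\{p_{r+1},\dots,p_{2r-1}\}$) is then realized as $r-1$ disjoint intervals through $p_r$. A generic sequence of sufficient length contains every order type of $2r-1$ points, so this suffices. For $d=2$, I would use the Erd\H os--Szekeres theorem to extract from any sufficiently long generic planar sequence a subsequence of length $3r-2$ in convex position, whose order type agrees with that of the parabolic moment curve; then I would verify by a direct planar argument --- essentially the fact that interleaved chords between convex-position points cross --- that every colorful partition of $\{1,\dots,3r-2\}$ is Tverberg-realized on such a subsequence, producing an explicit common point of the $r$ convex hulls.

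For $d=3$ with prescribed sizes $(2,3,4,\dots,4)$, I would exploit the rigidity that the sizes impose on the colorful structure. Among the three overlap indices $r$, $2r-1$, $3r-2$, the unique size-$2$ part must be $\{r,3r-2\}$ (the other two pairings fail the colorful condition), the size-$3$ part must contain $2r-1$, and each of the $r-2$ remaining size-$4$ parts takes exactly one non-overlap element from each color class $Y_1,\dots,Y_4$. I would then extract from the long generic $3$D sequence a substructure of order type matching a $3$-dimensional cyclic polytope of length $4r-3$, use the size-$2$ part $\{p_r,p_{3r-2}\}$ to pin the candidate Tverberg point to a one-parameter locus (the line through these two points), and verify that the remaining size-$3$ and size-$4$ convex hulls can simultaneously be chosen to cross this line at a common point.

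\textbf{Main obstacle.} The hardest step is the positive direction for $d=3$: in dimensions $\le 2$ the Erd\H os--Szekeres substructure is canonical and the colorful intersections follow from standard chord crossings, but in dimension $3$ one must combine the combinatorial rigidity imposed by the size pattern with a genuine three-dimensional incidence analysis, and no equally clean substructure extraction is available. I expect that the specific size pattern $(2,3,4,\dots,4)$ is precisely what keeps the problem tractable, by reducing the simultaneous-intersection problem to a one-parameter question along the line fixed by the size-$2$ part; extending beyond this pattern would lose this rigidity and would seem to require techniques addressing the full $d=3$ conjecture.
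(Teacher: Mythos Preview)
This theorem is quoted in the paper as a result of Bukh, Loh, and Nivasch~\cite{bukh2016}; the present paper supplies no proof and simply uses it as a black box (in the proof of Theorem~\ref{thm:3-dim-prescr}). So there is no ``paper's own proof'' to compare your sketch against.

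That said, your proposed strategy has a genuine gap. Your positive-direction argument for $d=2$ and $d=3$ rests on extracting, from an arbitrary long generic sequence, a single subsequence whose order type matches the moment curve (convex position for $d=2$, cyclic-polytope order type for $d=3$) and then verifying that \emph{every} colorful partition is a Tverberg partition on that one subsequence. This second step is false already for $d=2$: Tverberg-ness of a partition is \emph{not} an invariant of the order type. Concretely, take the regular heptagon $p_k=(\cos(2\pi k/7),\sin(2\pi k/7))$, $k=1,\dots,7$; this has the same all-positive cyclic order type as seven points on a parabola. The partition $\{1,4,6\},\{2,5\},\{3,7\}$ is colorful (for $r=3$, $d=2$ with $Y_1=\{1,2,3\}$, $Y_2=\{3,4,5\}$, $Y_3=\{5,6,7\}$), yet it is not Tverberg here: the segments $p_2p_5$ and $p_3p_7$ meet in a single point that lies on the far side of the edge $p_1p_4$ from~$p_6$, hence outside the triangle $p_1p_4p_6$. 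On the stretched moment curve the same colorful partition \emph{is} Tverberg. So two realizations of the identical order type give different answers, and the ``direct planar argument via interleaved chords'' you envision cannot succeed uniformly over all convex-position realizations.

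What Conjecture~\ref{conj} actually asserts is that each colorful partition is realized on \emph{some} subsequence; different colorful partitions may require different witnessing subsequences, and this is precisely what makes the positive direction nontrivial. Note too that if your reduction worked it would prove Conjecture~\ref{conj} for every~$d$ in one stroke via hypergraph Ramsey on orientations of $(d{+}1)$-tuples, leaving no role for the restrictions $d\le 2$ or the special size pattern $(2,3,4,\dots,4)$ in $d=3$. Your analysis of which overlap indices a size-$2$ and size-$3$ part must use is correct and is a useful observation, but the ``main obstacle'' you identify is not the real one: the failure occurs one step earlier, at the unjustified passage from order type to Tverberg partitions.
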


Some additional cases are treated in~\cite{bukh2016}. Here we will use Theorem~\ref{thm:dim-3} combined with recent work
of Joji\'c, Vre\'cica, and \v Zivaljevi\'c~\cite{jojic2015} to prove the affine AP conjecture in dimensions~${d \le 3}$. 
The AP conjecture obviously holds in the real line for~${d = 1}$. The only Tverberg admissible $r$-tuple for ${d = 2}$ is 
$(1,1,2,\dots, 2)$.
Characterizing all Tverberg admissible $r$-tuples for $d = 3$ is simple: if integers $d_1, \dots, d_r$ with $1 \le d_i \le 3$ satisfy $\sum_i d_i = 3(r-1)$,
then the $r$-tuple $(d_1, \dots, d_r)$ either is $(1,2,3,\dots,3)$ or $(2,2,2,3,\dots,3)$. It is not too difficult to see from~\cite{bukh2016}
and~\cite{jojic2015} that in any sufficiently large point set there are pairwise disjoint subsets of respective cardinalities $d_i+1$
whose convex hulls share a common point:

\begin{theorem}
\label{thm:3-dim-prescr}
	The affine AP conjecture holds in dimensions $d \le 3$.
\end{theorem}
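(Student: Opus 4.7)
The strategy is to enumerate all Tverberg admissible $r$-tuples in each dimension $d\le 3$ and dispatch the few remaining cases by appealing to the theorems already stated in this section. The cases $d=1$ and $d=2$ are essentially trivial: in dimension one the only admissible $r$-tuple (after imposing $0\le d_i\le 1$ and $\sum d_i=r-1$) is $(0,1,1,\dots,1)$, and for $2r-1$ points on the line, taking the median as the singleton and pairing the $i$-th and $(2r-i)$-th points gives nested intervals containing the median. In dimension two the only admissible $r$-tuple is $(1,1,2,\dots,2)$, which is the colorful pattern of sizes $(2,2,3,\dots,3)$; thus Conjecture~\ref{conj} for $d\le 2$ (Theorem~\ref{thm:dim-3}) produces such a partition in every sufficiently long generic point set, and the standard limiting argument mentioned after Conjecture~\ref{conj} extends it to arbitrary point sets.

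The main work is the case $d=3$. The plan is to first check that the constraints $1\le d_i\le 3$ and $\sum d_i=3(r-1)$ force $(d_1,\dots,d_r)$ to be either $(1,2,3,\dots,3)$ or $(2,2,2,3,\dots,3)$. Writing $a,b,c$ for the number of $1$'s, $2$'s, $3$'s, the system $a+b+c=r$ and $a+2b+3c=3r-3$ gives $b+2c=2r-3$, and combined with $a,b,c\ge 0$ this admits exactly the two solutions $(a,b,c)=(1,1,r-2)$ and $(0,3,r-3)$.

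For the tuple $(1,2,3,\dots,3)$ the partition sizes $(d_i+1)$ are $(2,3,4,\dots,4)$, which is precisely the family handled by the second sentence of Theorem~\ref{thm:dim-3}. That theorem asserts a colorful Tverberg partition of this shape occurs in any sufficiently long generic sequence in $\R^3$; since generic point sets are dense and the existence of the desired partition is a closed condition, a limiting argument transfers the conclusion to every sufficiently large point set. For the tuple $(2,2,2,3,\dots,3)$ we have $|d_i-d_j|\le 1$, so it is balanced, and Theorem~\ref{thm:balanced} (proved in this paper, extending Joji\'c--Vre\'cica--\v Zivalje\-vi\'c) directly provides the required partition.

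The only mild obstacle is verifying that Theorem~\ref{thm:dim-3} and Theorem~\ref{thm:balanced} as stated really hand us \emph{affine} prescribability rather than merely some weaker generic or topological statement. For Theorem~\ref{thm:balanced} this is by construction; for Theorem~\ref{thm:dim-3} the output is an affine Tverberg partition in a generic point set, and one must observe that the set of $N$-tuples of points in $\R^d$ admitting a partition of the prescribed combinatorial type with non-empty intersection of convex hulls is closed, so density of generic configurations yields the statement for all sufficiently large point sets. With this observation the two admissible tuples in dimension three are both covered, completing the proof.
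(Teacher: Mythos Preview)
Your approach is essentially the same as the paper's: enumerate the admissible tuples in each dimension, handle the $(1,2,3,\dots,3)$ case via Theorem~\ref{thm:dim-3} plus a limiting argument, and handle the balanced case via Theorem~\ref{thm:balanced}. Your closedness justification for the limiting step is fine (and arguably cleaner than the paper's more hands-on version, which tracks the intersection point into a possibly lower-dimensional face and then pads with extra vertices).

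There is, however, one genuine gap. You invoke Theorem~\ref{thm:balanced} ``directly'' for the tuple $(2,2,2,3,\dots,3)$, but that theorem is stated only for $r$ a prime power, whereas the affine AP conjecture is asserted for \emph{all}~$r$. So for $r$ not a prime power (e.g.\ $r=6$) your argument as written does not apply. The fix, which the paper makes explicit, is to choose a prime power $q\ge r$, apply Theorem~\ref{thm:balanced} to the balanced $q$-tuple $(2,2,2,3,\dots,3)$ with $q-3$ threes, and then discard $q-r$ of the three-dimensional faces; what remains is exactly the desired $r$-tuple. You should add this one line.
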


\begin{proof}
	In~\cite[Section 3.6]{bukh2016} colorful $r$-partitions of $\{1, 2, \dots, (r-1)(3+1)+1\}$ with parts of sizes $2,3,4,\dots, 4$
	are constructed. Theorem~\ref{thm:dim-3} then shows that in any sufficiently large 
	generic point set in~$\R^3$ we can find $r$ pairwise disjoint sets of sizes $2,3,4,\dots, 4$ whose convex hulls share a common point.
	A limiting argument extends this to nongeneric point sets. 
	For this limiting argument notice that in a generic point set the point of $r$-fold intersection lies in the relative interior of the 
	involved faces for codimension reasons. This intersection can thus not disappear in the limit, while the point of intersection might 
	lie in a face of lower dimension. This is not a problem as we can always add points to such a face, while keeping all faces vertex-disjoint.
	Thus the $r$-tuple $(1,2,3, \dots, 3)$ is affinely Tverberg prescribable.
	The same reasoning shows that $(1,1,2,\dots, 2)$ is affinely Tverberg prescribable in dimension~${d = 2}$.
	The $r$-tuple $(2,2,2,3,\dots,3)$ is balanced and thus Tverberg prescribable provided that $r$ is a prime power, even for general 
	continuous maps, by a result of Joji\'c, Vre\'cica, and \v Zivaljevi\'c~\cite{jojic2015}. We give a significantly simplified proof of that result;
	see Theorem~\ref{thm:balanced}. Note that it is sufficient to prove that the $r$-tuple $(2,2,2,3,\dots,3)$ is Tverberg prescribable
	for arbitrarily large~$r$, since we can just forget about faces.
\end{proof}

We remark that while Theorem~\ref{thm:dim-3} is asymptotic with bounds for the size of the required point set that
are potentially far from optimal, Theorem~\ref{thm:balanced} gives that any balanced Tverberg admissible $r$-tuple
is Tverberg prescribable already for point sets of size at least $(r-1)(d+2)+1$, and this number is known to be optimal.

\begin{problem}
	Let $(d_1, \dots, d_r)$ be a Tverberg admissible $r$-tuple. Is it true that for any point set of size ${(r-1)(d+2)+1}$ there are
	pairwise disjoint subsets $X_1, \dots, X_r$ with $|X_i| = d_i+1$ such that $\conv X_1 \cap \dots \cap \conv X_r \ne \emptyset$?
	If not, what are good upper bounds for the required number of points?
\end{problem}

Note that the problem above is open even for $d_1 = \dots = d_r$ and $r$ with at least two distinct prime divisors.
P\'or's announced proof of Conjecture~\ref{conj} settles the affine AP Conjecture in full generality:

\begin{theorem}
\label{thm:conj-implies-prescr}
	Conjecture~\ref{conj} implies Conjecture~\ref{conj:affine-AP}.
\end{theorem}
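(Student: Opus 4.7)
The plan is to reduce the affine AP conjecture to the combinatorial task of constructing, for any Tverberg admissible $r$-tuple $(d_1, \dots, d_r)$, a colorful partition $X_1 \sqcup \dots \sqcup X_r$ of $\{1, 2, \dots, (r-1)(d+1)+1\}$ with $|X_i| = d_i + 1$. Once such a partition is exhibited, Conjecture~\ref{conj} directly produces $r$ pairwise disjoint subsets of these prescribed cardinalities whose convex hulls share a common point in any sufficiently long sequence of generic points in $\R^d$. A limiting argument exactly as in the proof of Theorem~\ref{thm:3-dim-prescr} then removes the genericity hypothesis: approximate any point set by generic ones of the same size, extract a subsequence along which the chosen indices stabilize (there are only finitely many choices), and invoke continuity of convex hulls to preserve the $r$-fold intersection in the limit.

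The core of the proof is thus the combinatorial construction. Writing $Y_k = \{(r-1)(k-1)+1, \dots, (r-1)k+1\}$ for the color classes, one has $|Y_k| = r$ and $Y_k \cap Y_{k+1} = \{(r-1)k+1\}$ for $1 \le k \le d$; call these $d$ shared elements the \emph{overlap elements}. Since each overlap element lies in two consecutive color classes while every other element lies in exactly one, the colorful condition $|X_i \cap Y_k| = 1$ for all $k$ yields
\[
d+1 \;=\; \sum_{k=1}^{d+1} |X_i \cap Y_k| \;=\; |X_i| + n_i,
\]
where $n_i$ is the number of overlap elements contained in $X_i$. Prescribing $|X_i| = d_i + 1$ therefore forces $n_i = d - d_i$, and Tverberg admissibility yields $n_i \ge 0$, $\sum_i n_i = rd - (r-1)d = d$, and $n_i \le d - \lfloor d/2 \rfloor = \lceil d/2 \rceil$.

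The only further constraint is that two consecutive overlap elements $(r-1)(k-1)+1$ and $(r-1)k+1$ both belong to $Y_k$, so they cannot be placed in a common part without violating colorfulness. Assigning overlap elements to parts thus reduces to constructing a sequence $(i_1, \dots, i_d) \in \{1, \dots, r\}^d$ with $i_k \ne i_{k+1}$ in which the value $j$ appears $n_j$ times. This classical rearrangement problem is solvable precisely when $\max_j n_j \le \lceil d/2 \rceil$, which is exactly what we have. After the overlap elements are placed, the non-overlap elements of each $Y_k$ can be bijected to the parts not yet represented in $Y_k$, completing the colorful partition. The main obstacle in the proof is therefore this numerical matching: the admissibility lower bound $\lfloor d/2 \rfloor$ on each $d_i$ corresponds exactly to the arrangeability threshold $\lceil d/2 \rceil$ for sequences without adjacent repetitions, and it is this precise coincidence that makes Conjecture~\ref{conj} strong enough to yield Conjecture~\ref{conj:affine-AP}.
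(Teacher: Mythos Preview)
Your proof is correct and follows essentially the same approach as the paper: both reduce Conjecture~\ref{conj:affine-AP} to constructing a colorful partition with the prescribed part sizes, and both do so by first assigning the $d$ ``overlap'' elements $(r-1)k+1$ to parts so that no part receives consecutive overlaps, then filling in the remaining elements of each color class. The paper gives an explicit odd-then-even placement scheme for this assignment, whereas you invoke the general arrangeability criterion $\max_j n_j \le \lceil d/2\rceil$; these are equivalent, and you additionally make the limiting argument for non-generic point sets explicit.
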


\begin{proof}
	Given nonnegative integers $d_1, \dots, d_r$ with $\sum d_i = (r-1)d$
	such that each $d_i$ is at least $\lfloor \frac{d}{2} \rfloor$ there is a colorful $r$-partition of $\{1,2, \dots, {(r-1)(d+1)+1}\}$
	such that the $r$ parts have sizes~${d_i+1}$. This implies Conjecture~\ref{conj:affine-AP} given that this colorful partition
	must occur for sufficiently large point sets; this is guaranteed by Conjecture~\ref{conj}.
	In order to show the existence of this colorful $r$-partition first note that $\{1, \dots, d\}$ can be split into
	$r$ (possibly empty) parts $A_1, \dots, A_r$ with $|A_i| = d-d_i$ and no $A_i$ contains two consecutive integers.
	Simply assign the odd integers in order to $A_1$ first, then $A_2$, and so on; continue with the even integers in the same
	fashion once all odd ones are assigned to some~$A_i$. Only the set $A_1$ may have cardinality $\lceil\frac{d}{2}\rceil$, while
	$A_i$ for $i \ge 2$ has cardinality at most $\lfloor\frac{d}{2} \rfloor$. This ensures that no $A_i$ will receive two consecutive 
	integers by the procedure above.
	
	The required colorful partition of $\{1,2, \dots, {(r-1)(d+1)+1}\}$ can now be constructed as follows: if $k \in A_i$ place
	$(r-1)k+1$ in~$X_i$. Place one of the points $\{1, \dots, r-1\}$ in each $X_j$ with $r \notin X_j$.
	Place the $r-2$ points strictly between $(r-1)k+1$ and $(r-1)(k+1)+1$ each in one of the sets 
	$X_j$ with $(r-1)k+1 \notin X_j$ and $(r-1)(k+1)+1 \notin X_j$. The last $r-1$ points $\{(r-1)d+2, \dots, (r-1)(d+1)+1\}$
	can be put one each into the sets $X_j$ with $(r-1)d+1 \notin X_j$. By construction the resulting partition is colorful
	and $|X_j| = d-d_j$.
\end{proof}

If any sufficiently large point set in~$\R^d$ contains a Tverberg partition with one face of dimension~$k$, then we have seen
that necessarily $k \ge \lfloor \frac{d}{2} \rfloor$ by placing points in cyclic position. We will now show an even better lower
bound for the dimension in the continuous case:

\begin{theorem} 
\label{thm:prescribable}
	If $(d_1, \dots, d_r)$ is Tverberg prescribable for $r$ and $d$ then $d_i \ge \frac{r-1}{r}(d-1)$ for all~$i$.
\end{theorem}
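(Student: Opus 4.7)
The plan is to argue the contrapositive: assuming some $d_j$ satisfies $d_j < \frac{r-1}{r}(d-1)$, I will construct, for every $N$, a continuous map $f \colon \Delta_N \to \R^d$ such that no $r$ pairwise disjoint faces of dimensions $(d_1, \dots, d_r)$ have images sharing a common point. Having such an $f$ for arbitrarily large $N$ contradicts Tverberg prescribability of $(d_1, \dots, d_r)$. After relabeling I may assume $j = 1$, and set $k := d_1$, so the hypothesis reads $rk < (r-1)(d-1)$.

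First I would start from a degenerate affine model: pick a hyperplane $H \cong \R^{d-1} \subset \R^d$ with unit normal $\nu$, and let $g \colon \Delta_N \to H$ be the affine extension of a moment-curve placement of the vertices of $\Delta_N$. Then I lift off the hyperplane by setting $f(x) := g(x) + h(x)\,\nu$ for a continuous height function $h \colon \Delta_N \to \R$ to be chosen. For any $r$ pairwise disjoint faces $\sigma_1, \dots, \sigma_r$ of dimensions $(d_1, \dots, d_r)$, the $g$-coincidence locus inside $\sigma_1 \times \cdots \times \sigma_r$ has generic dimension $\sum d_i - (r-1)(d-1) = r - 1$ by Tverberg admissibility, and imposing the $r-1$ additional equalities $h(x_1) = \cdots = h(x_r)$ cuts this down to expected dimension $0$. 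So for generic $g$ and $h$ the set of $r$-fold coincidences of $f$ on each such tuple is finite.

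The critical step is then to choose $h$ so that this finite set is empty uniformly over all pairwise disjoint $(d_1, \dots, d_r)$-tuples of faces. This is where I expect the hypothesis $rk < (r-1)(d-1)$ to enter: for the particular face $\sigma_1$ of dimension $k$, the complementary intersection $\bigcap_{i \geq 2} f(\sigma_i)$ has codimension $k$ in $\R^d$, and the inequality $rk < (r-1)(d-1)$ should provide exactly the numerical slack needed for a deformation of $f|_{\sigma_1}$ to miss these loci simultaneously across all choices of the tuple. Making this precise amounts to showing that the Tverberg-type obstruction --- the signed or $\bmod\, r$ count of coincidence points on a given tuple --- vanishes for the dimension distribution $(d_1, \dots, d_r)$ whenever one of the $d_i$'s is below the threshold.

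\textbf{Main obstacle.} The real difficulty is extracting an individual bound on $d_1$ from an argument whose naive dimension count only yields the sum bound $\sum d_i \geq (r-1)(d-1)$, which is vacuous under Tverberg admissibility. The hypothesis $rk < (r-1)(d-1)$ must be used to show that the configuration-space obstruction to removing coincidences actually collapses when any one face dimension is small --- and, in keeping with the paper's insistence on elementary methods, without invoking the Mabillard--Wagner $r$-fold Whitney machinery. I anticipate the cleanest route is to design $h$ directly in terms of the small face $\sigma_1$, arranging it so that the isolated coincidence points are bypassed for combinatorial rather than cohomological reasons, thereby turning the argument into a hands-on geometric construction rather than an obstruction-theoretic computation.
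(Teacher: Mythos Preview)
Your general setup --- write $f = (g,h)$ with $g\colon \Delta_N \to \R^{d-1}$ affine in general position and $h\colon \Delta_N \to \R$ a height function to be chosen --- is exactly the framework the paper uses. But you are missing the one idea that makes the argument work, and your proposal is honest about this: you do not know how to choose~$h$. The paper's choice is
\[
h(x) = \dist\bigl(x, \Delta_N^{(d_1)}\bigr),
\]
the distance to the $d_1$-skeleton (this is the ``constraint method''). With this $h$, no obstruction theory, dimension counting, or deformation is needed at all. One argues \emph{directly}: suppose $(d_1,\dots,d_r)$ is prescribable, so for large $N$ there exist disjoint faces $\sigma_i$ with $\dim\sigma_i = d_i$ and points $x_i \in \sigma_i$ with $f(x_1) = \dots = f(x_r)$. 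Since $\sigma_1 \subseteq \Delta_N^{(d_1)}$, one has $h(x_1) = 0$, hence $h(x_i) = 0$ for all~$i$, so every $x_i$ lies in the $d_1$-skeleton. Thus $g$ already has an $r$-fold coincidence among $r$ pairwise disjoint faces each of dimension at most~$d_1$, and strong general position of $g$ in $\R^{d-1}$ forces $r(d-1-d_1) \le d-1$, i.e.\ $d_1 \ge \tfrac{r-1}{r}(d-1)$.

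By contrast, your route --- choose $h$ generically so the coincidence set on each tuple is $0$-dimensional, then perturb to make it empty --- is precisely a Mabillard--Wagner style elimination argument, and you correctly flag that the paper wants to avoid this. Without the specific $h$ above, your plan has no mechanism to convert ``expected dimension zero'' into ``actually empty''; the signed or mod-$r$ count you allude to need not vanish, and there are infinitely many tuples to handle simultaneously as $N \to \infty$. (You also silently assume $\sum d_i = (r-1)d$, which is not part of the hypothesis of the theorem, though this is a minor issue next to the missing choice of~$h$.)
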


\begin{proof}
	Let $d_1 \le d_2 \dots \le d_r$ be Tverberg prescribable for $r$ and~$d$. For some positive integer $N$ let 
	${g\colon \Delta_N \longrightarrow \R^{d-1}}$ be an affine strong general position map. 
	See Perles and Sigron~\cite{perles2014} for the notion of strong general position; if a point set in~$\R^d$ is in strong general
	position then the codimension of the intersection of convex hulls of any $r$ pairwise disjoint sets is the sum
	of codimensions  or~$d+1$, whichever is smaller. Here a convex set in $\R^d$ has codimension $d+1$ if and only if it is empty.
	Then the map
	$f\colon \Delta_N \longrightarrow \R^d, x \mapsto (g(x), \dist(x, \Delta_N^{(d_1)}))$ that measures the distance to the 
	$d_1$-skeleton~$\Delta_N^{(d_1)}$ in the last component (say for the standard Euclidean metric on the simplex) is continuous.
	Thus for sufficiently large $N$ we find pairwise disjoint faces $\sigma_1, \dots, \sigma_r$ of~$\Delta_N$ of dimensions
	$\dim\sigma_i = d_i$ and such that ${f(\sigma_1) \cap \dots \cap f(\sigma_r)} \ne \emptyset$.
	Choose points $x_i \in \sigma_i$ with $f(x_1) = f(x_2) = \dots = f(x_r)$. Then 
	$x_1 \in \sigma_1 \subseteq \Delta_N^{(d_1)}$, $g(x_1) = g(x_2) = \dots = g(x_r)$, and
	$0 = \dist(x_1, \Delta_N^{(d_1)}) = \dist(x_2, \Delta_N^{(d_1)}) = \dots = \dist(x_r, \Delta_N^{(d_1)})$.
	
	Thus the minimal supporting faces of the points $x_1, \dots, x_r$ determine $r$ pairwise disjoint faces 
	in $\Delta_N^{(d_1)}$ that have a common point of intersection under~$g$. By strong general position we have 
	the bound $r(d-1-d_1) \le d-1$ and thus $d_1 \ge \frac{r-1}{r}(d-1)$.
\end{proof}

For $d = 3$ and $r \ge 3$ the lower bound $\frac{r-1}{r}(d-1)$ is strictly larger than one, and thus the $r$-tuple
$(1,2,3,\dots, 3)$ is not Tverberg prescribable. This observation combined with Theorem~\ref{thm:3-dim-prescr}
immediately yields the following corollary giving an elementary example of the different nature of Tverberg-type
results for affine and continuous maps.

\begin{corollary}
\label{cor:prescr}
	In dimension $d = 3$ and for every $r \ge 3$ there is a Tverberg admissible $r$-tuple that is affinely Tverberg prescribable
	but not Tverberg prescribable. In particular, Conjecture~\ref{conj:AP} and the continuous relaxation of Conjecture~\ref{conj} 
	are wrong for any~$r \ge 3$.
\end{corollary}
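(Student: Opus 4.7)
The plan is to obtain the corollary by pitting Theorem~\ref{thm:3-dim-prescr} against Theorem~\ref{thm:prescribable}. For $d = 3$ and $r \ge 3$, the lower bound of Theorem~\ref{thm:prescribable} reads
\[
d_i \;\ge\; \tfrac{r-1}{r}(d-1) \;=\; 2 - \tfrac{2}{r} \;\ge\; \tfrac{4}{3} \;>\; 1,
\]
so every continuously Tverberg prescribable $r$-tuple in $\R^3$ must have all entries $\ge 2$. Admissibility, on the other hand, only forces $d_i \ge \lfloor 3/2 \rfloor = 1$, leaving exactly the gap we need to exploit.

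Concretely, I would take the $r$-tuple $(1,2,3,\dots,3)$, consisting of a single $1$, a single $2$, and $r-2$ copies of $3$. It is Tverberg admissible since $\sum_i d_i = 1 + 2 + 3(r-2) = 3(r-1)$ and each entry lies in $\{1,2,3\}$. Theorem~\ref{thm:3-dim-prescr} guarantees that this tuple is affinely Tverberg prescribable. On the other hand, Theorem~\ref{thm:prescribable} combined with the inequality above excludes it from being continuously Tverberg prescribable, since its smallest entry $d_1 = 1$ is strictly less than $\tfrac{4}{3}$. This establishes the first sentence of the corollary.

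For the second sentence, the same tuple directly refutes Conjecture~\ref{conj:AP}: it is admissible but not prescribable for every $r \ge 3$, and in particular for every prime-power $r \ge 3$, which covers the original hypothesis of Conjecture~\ref{conj:AP}. For the continuous relaxation of Conjecture~\ref{conj}, recall from the proof of Theorem~\ref{thm:3-dim-prescr} that a colorful $r$-partition of $\{1, \dots, (r-1)\cdot 4 + 1\}$ with parts of sizes $2,3,4,\dots,4$ (i.e., of sizes $d_i + 1$ for the tuple above) is constructed in~\cite[Section 3.6]{bukh2016}. A continuous analog of Conjecture~\ref{conj} would assert that this colorful partition must occur as a Tverberg partition for every sufficiently large continuous map to~$\R^3$, which would make $(1,2,3,\dots,3)$ Tverberg prescribable and contradict what we just proved.

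There is no substantive obstacle: the heavy lifting lives in Theorems~\ref{thm:3-dim-prescr} and~\ref{thm:prescribable}, and the only genuinely new input is the elementary inequality $2 - \tfrac{2}{r} > 1$ for $r \ge 3$, together with the easy bookkeeping that $(1,2,3,\dots,3)$ is Tverberg admissible. The choice of tuple is essentially forced, since for $d = 3$ the admissible tuples are $(1,2,3,\dots,3)$ and $(2,2,2,3,\dots,3)$, and only the first has an entry below the continuous prescribability threshold.
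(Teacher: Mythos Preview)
Your proof is correct and follows exactly the approach the paper takes: the paragraph immediately preceding the corollary points out that for $d=3$ and $r\ge 3$ the bound $\frac{r-1}{r}(d-1)>1$ rules out the tuple $(1,2,3,\dots,3)$ continuously, while Theorem~\ref{thm:3-dim-prescr} handles it affinely. Your additional bookkeeping (verifying admissibility, spelling out why the continuous relaxation of Conjecture~\ref{conj} fails via the existence of a colorful partition with part sizes $2,3,4,\dots,4$) is a welcome elaboration of what the paper leaves implicit.
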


The continuous maps $f\colon \Delta_N \longrightarrow \R^3$ we constructed that avoid Tverberg partitions involving 
faces of dimension one are tame; in fact, they are affine on the first barycentric subdivision of the simplex.

Let $(d_1, \dots, d_r)$ be Tverberg admissible for $d$ and~$r$. Recall that $(d_1, \dots, d_r)$ is \emph{balanced}
if $|d_i - d_j| \le 1$ for all~$i$ and~$j$. Joji\'c, Vre\'cica, and \v Zivaljevi\'c~\cite{jojic2017, jojic2015} showed recently in a series of two
technical papers that for $r$ a power of a prime every balanced Tverberg admissible $r$-tuple is Tverberg
prescribable. Here we want to give short and simple proof of this result inspired by the papers of Sarkaria~\cite{sarkaria1990, sarkaria1991}.

Let $\Sigma_{N+1,r}^{k_1, \dots, k_r} = \bigcup_{\pi \in \Sym_r} (\Delta_N^{(k_{\pi(1)}-1)} * \dots * \Delta_N^{(k_{\pi(r)}-1)})_\Delta$, that is,
$\sigma_1 * \dots *\sigma_r$ is a face of $\Sigma_{N+1,r}^{k_1, \dots, k_r}$ if and only if the $\sigma_i$ are pairwise disjoint faces
of~$\Delta_N$ such that $\dim \sigma_i \le k_{\pi(i)}-1$ for some permutation $\pi \in \Sym_r$ that is independent of~$i$.
These \emph{symmetric multiple chessboard complexes} were introduced and studied by Joji\'c, Vre\'cica, and \v Zivaljevi\'c,
since they are the appropriate configuration space for the AP conjecture. 
They are $\Sym_r$-invariant subcomplexes of~$(\Delta_N)^{*r}_\Delta$, the \emph{$r$-fold deleted join} of~$\Delta_N$, that
contains a join $\sigma_1 * \dots * \sigma_r$ of faces $\sigma_i$ of~$\Delta_N$ whenever the $\sigma_i$ are pairwise disjoint.
In general, $K^{*r}_\Delta$ denotes the \emph{$r$-fold deleted join} of the simplicial complex~$K$.
Denote by $W_r = \{(y_1, \dots, y_r) \in \R^r \: | \: \sum y_i = 0\}$ the standard representation of the symmetric group~$\Sym_r$.
We refer to Matou\v sek~\cite{matousek2008} for further details and notation.
The balanced case of the AP conjecture will easily follow from the lemma below that can be seen as lifting the ``constraint method''
of Blagojevi\'c, Ziegler, and the author~\cite{blagojevic2014} to the associated configuration space. This yields reasoning similar
to that employed by Sarkaria~\cite{sarkaria1990, sarkaria1991} in earlier papers.

\begin{lemma}
\label{lem:constraint}
	Let $d_1, \dots, d_r$ be balanced with $\sum d_i = (r-1)d$ and $N = (r-1)(d+2)$. 
	Then there is a $\Sym_r$-equivariant map $\Phi\colon(\Delta_N)^{*r}_\Delta \longrightarrow W_r$
	such that $\Phi^{-1}(0) = \Sigma_{N+1,r}^{d_1+1, \dots, d_r+1}$.
\end{lemma}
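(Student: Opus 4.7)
The plan is to build $\Phi$ by a cellular construction on the first barycentric subdivision of the deleted join $J := (\Delta_N)^{*r}_\Delta$, choosing values in $W_r$ equivariantly at the new vertices and then extending piecewise-linearly.

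I first note that every vertex of $J$ is a single vertex of $\Delta_N$ placed in a single slot, hence a face of size $1 \le d_i + 1$; so every vertex of $J$ already lies in $\Sigma := \Sigma_{N+1,r}^{d_1+1,\dots,d_r+1}$, and no simplicial $\Sym_r$-equivariant map $J \to W_r$ sending all vertices to $0$ can cut out $\Sigma$. I therefore pass to the first barycentric subdivision $\mathrm{sd}(J)$, whose vertices are the barycenters $b_\tau$ of the cells $\tau = (\sigma_1,\dots,\sigma_r)$ of $J$.

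I then set $\Phi(b_\tau) = 0$ for every $\tau \in \Sigma$, and for each $\tau \in J \setminus \Sigma$ I choose $\Phi(b_\tau) \in W_r^{\Sym_E} \setminus \{0\}$, where $E = \{j : \sigma_j = \emptyset\}$ and $\Sym_E$ is the stabilizer of $\tau$ in $\Sym_r$. Since $\tau$ is nonempty one has $|E| \le r-1$, so $\dim W_r^{\Sym_E} \ge 1$ and such a choice is available. I make one such selection per $\Sym_r$-orbit of cells of $J \setminus \Sigma$ and extend by the $\Sym_r$-action, and then interpolate $\Phi$ piecewise-linearly across the simplices of $\mathrm{sd}(J)$ to obtain a continuous $\Sym_r$-equivariant map.

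To confirm $\Phi^{-1}(0) = \Sigma$, I analyze each top simplex $b_{\tau_1} < \cdots < b_{\tau_k}$ of $\mathrm{sd}(J)$, indexed by a flag $\tau_1 \subsetneq \cdots \subsetneq \tau_k$ of cells of $J$. Since $\Sigma$ is a subcomplex of $J$, the indices $i$ with $\tau_i \in \Sigma$ form an initial segment $\{1,\dots,m\}$. The piecewise-linear value $\sum_i t_i \Phi(b_{\tau_i}) = \sum_{i > m} t_i \Phi(b_{\tau_i})$ vanishes precisely on the face spanned by the zero-vertices $b_{\tau_1},\dots,b_{\tau_m}$, which sits inside $\mathrm{sd}(\Sigma)$.

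The main obstacle is to arrange the nonzero values so that no non-trivial non-negative combination $\sum_{i > m} t_i \Phi(b_{\tau_i}) = 0$ ever arises in the complement. The balanced hypothesis enters exactly here, through the codimension count $\operatorname{codim}_J \Sigma = N + 1 - \sum_i (d_i+1) = r - 1 = \dim W_r$, which supplies precisely the freedom needed. Processing the cells of $J \setminus \Sigma$ in order of increasing dimension, one has at each step to avoid a closed subset of $W_r^{\Sym_E}$ — namely, the vectors whose negatives lie in the non-negative cone of previously-chosen values along flags through $\tau$ — that is a proper union of cones of positive codimension in its ambient fixed subspace. A generic equivariant selection therefore succeeds, and the resulting $\Phi$ has the required zero set.
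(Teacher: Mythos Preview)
Your argument has a genuine gap, and in fact as written it would prove too much. You assert that the balanced hypothesis ``enters exactly here, through the codimension count $\operatorname{codim}_J \Sigma = r-1 = \dim W_r$.'' But that codimension identity holds for \emph{every} tuple $(d_1,\dots,d_r)$ with $\sum d_i = (r-1)d$: the top faces of $\Sigma$ always use $\sum(d_i+1) = (r-1)d + r$ vertices, so $\dim\Sigma = (r-1)(d+1)$ and the codimension is $r-1$ regardless of balance. Hence if your generic-choice argument were valid it would establish the lemma for unbalanced tuples as well, and then the proof of Theorem~\ref{thm:balanced} would show that every such tuple is Tverberg prescribable for prime powers~$r$. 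This contradicts Theorem~\ref{thm:prescribable}, which shows that tuples with $d_1 < \frac{r-1}{r}(d-1)$ are \emph{not} Tverberg prescribable. So the balanced hypothesis must be doing real work that your argument does not capture.

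Concretely, the final step --- that the set of forbidden values for $\Phi(b_\tau)$ is ``a proper union of cones of positive codimension in its ambient fixed subspace'' --- is unjustified and in general false. A chain in $J\setminus\Sigma$ below $\tau$ can be much longer than $r-1$ (for instance with $r=3$, $d=3$, $d_i\equiv 2$, one has chains of length~$8$ in $J\setminus\Sigma$), so the negative cones you must avoid are typically full-dimensional, and there is no reason their union, intersected with the possibly one-dimensional fixed subspace $W_r^{\Sym_E}$, should miss anything. The paper's proof avoids this entirely by making an \emph{explicit} equivariant choice: it sends $b_\tau$ (for $\tau\notin\Sigma$) to the projection $\bar e_i\in W_r$, where $i$ is the slot containing the smallest-labeled vertex among the slots of minimal size. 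A convex combination of the $\bar e_i$ vanishes in $W_r$ only if all $r$ basis vectors appear; the paper then gives a short combinatorial argument --- tracking the dimensions $\dim\sigma_\ell^{(i)}$ in an $r\times r$ matrix along a putative flag hitting all $r$ values --- showing that the balanced condition together with the vertex bound $N+1 = (r-1)(d+2)+1$ forces the last row of this matrix to sum to at least $(r-1)(d+1)+1$, a contradiction. That matrix argument is where balance is genuinely used, and it is the missing idea in your attempt.
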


\begin{proof}
	We define the map $\Phi$ as an affine map on the barycentric subdivision~$((\Delta_N)^{*r}_\Delta)'$ of~$(\Delta_N)^{*r}_\Delta$.
	Label the vertices of~$\Delta_N$ by $1, 2, \dots, N+1$ in some arbitrary order. Define $\Phi$ to be zero on any vertex
	subdividing a face of~$K=\Sigma_{N+1,r}^{d_1+1, \dots, d_r+1}$. If $v$ subdivides face $\sigma_1 * \dots * \sigma_r$ not in~$K$ map $v$ to
	the standard basis vector $e_i \in \R^r$, where $\sigma_i$ has the vertex of smallest label among all faces $\sigma_j$ of the lowest dimension.
	The map $\Phi$ maps to~$W_r$ after projecting along the diagonal~$D = \{(x, \dots, x)\}$ and is $\Sym_r$-equivariant 
	and zero on~$K$ by definition. It is left to check that for $x \in (\Delta_N)^{*r}_\Delta \setminus K$ we have that $\Phi(x) \notin D$.
	Let $x \in (\Delta_N)^{*r}_\Delta$ with $\Phi(x) \in D$. Then either $\Phi(x) = 0$ and $x \in K$ or otherwise $x$ is contained in
	a face of $((\Delta_N)^{*r}_\Delta)'$ that has vertices $v_1, \dots, v_r$ with $\Phi(v_i) = e_i$. The $v_i$ subdivide faces
	$\sigma_1^{(i)} * \dots * \sigma_r^{(i)}$ of $(\Delta_N)^{*r}_\Delta$ that are totally ordered by inclusion in some way.
	W.l.o.g. $\sigma_1^{(i)} * \dots * \sigma_r^{(i)} \subsetneq \sigma_1^{(j)} * \dots * \sigma_r^{(j)}$ if $i < j$.
	
	Let $k = \lfloor \frac{r-1}{r}d \rfloor$ such that $d_1 = \dots = d_t = k$ and $d_{t+1} = \dots = d_r = k+1$.
	Record the dimensions in an $r$-by-$r$ matrix $M = (\dim \sigma_\ell^{(i)})_{1 \le \ell, i \le r}$, where rows correspond to
	constant~$i$ and columns correspond to constant~$\ell$. The integers in the columns are nondecreasing from top to 
	bottom. Moreover, in the $i$-th row the entry in column~$i$ is weakly the smallest among all entries of that row since
	$\Phi(v_i) = e_i$. Also, the entries in the $i$-th column, $1 \le i \le r-1$ must increase from row~$i$ to~${i+1}$ since
	$\Phi(v_{i+1}) \ne e_i$. As the faces $\sigma_1^{(i)} * \dots * \sigma_r^{(i)}$
	may at most involve $N+1 = (r-1)(d+2)+1$ vertices the sum of each row of~$M$ is bounded from above by~${(r-1)(d+1)}$.
	Lastly, as the faces $\sigma_1^{(i)} * \dots * \sigma_r^{(i)}$ are not contained in~$K$, each row either contains an
	entry that is at least $k+2$ or at least $r-t+1$ entries equal to~${k+1}$. 
	
	This leads to a contradiction; such a matrix does not exist. If the first row contains an entry that is at least~${k+2}$, then 
	the last row has all entries $\ge k+2$. If the first row contains $r-t+1$ entires equal to $k+1$, then the last row has all
	entries $\ge k+1$ and $r-t$ entries $\ge k+2$. In either case the sum of the last row is at least $\sum d_i +1 = (r-1)(d+1)+1$,
	which is a contradiction.
\end{proof}

As an example consider why $\Sigma_{15,3}^{4,4,5}$ is the zero set of an equivariant
map $(\Delta_{14})^{*3}_\Delta \longrightarrow W_3$. A face $\sigma_1 * \sigma_2 * \sigma_3$ that is not contained in $\Sigma_{15,3}^{4,4,5}$ either
contains $\sigma_i$ of dimension at least five or two $\sigma_i$ of dimension four. Now the proof above provides us with three
distinct such faces that are totally ordered by inclusion. Moreover, each $\sigma_i$ has to be the face of (weakly) least dimension 
once. If the smallest face has $\sigma_i$ of respective dimensions $(0,4,4)$ then the largest of the three faces in the total order
has at least dimensions $(4,4,5)$. Such a face involves $16$ vertices, but by definition of $\Sigma_{15,3}^{4,4,5}$ the face may involve 
at most $15$ vertices.

\begin{lemma}[\"Ozaydin~\cite{ozaydin1987}]
\label{lem:no-equiv-map}
	Let $r$ be a power of a prime, $d\ge 1$ an integer, and $N\ge (r-1)(d+2)$. Then there is no $\Sym_r$-equivariant map
	$(\Delta_N)^{*r}_\Delta \longrightarrow S(W_r^{\oplus (d+2)})$.
\end{lemma}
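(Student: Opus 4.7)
The plan is equivariant obstruction theory with respect to a maximal elementary abelian $p$-subgroup of $\Sym_r$. It suffices to treat the critical case $N = (r-1)(d+2)$, because for larger $N$ any equivariant map ${(\Delta_N)^{*r}_\Delta \longrightarrow S(W_r^{\oplus(d+2)})}$ restricts to the $\Sym_r$-invariant subcomplex $(\Delta_{(r-1)(d+2)})^{*r}_\Delta$ obtained by selecting $(r-1)(d+2)+1$ of the available vertices of $\Delta_N$. In the critical case I match dimensions and connectivity on the two sides and then invoke Volovikov's lemma.

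First I would identify $(\Delta_N)^{*r}_\Delta$ combinatorially. A simplex is a choice of pairwise disjoint faces $\sigma_1,\dots,\sigma_r$ of $\Delta_N$, equivalently a partial function $[N+1] \longrightarrow [r]$. This identifies the deleted join $\Sym_r$-equivariantly with the $(N+1)$-fold join $[r]^{*(N+1)}$ of the discrete $r$-point set, with $\Sym_r$ acting on each factor by permuting $[r]$. The join formula for reduced homology then gives $\widetilde H_N([r]^{*(N+1)}) \cong \Z^{(r-1)^{N+1}}$, and vanishing in other degrees, so by Hurewicz the source is $(N-1)$-connected. On the target side, $W_r$ has real dimension $r-1$, so $S(W_r^{\oplus(d+2)})$ is a sphere of dimension $(r-1)(d+2)-1 = N-1$.

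Next, write $r = p^k$ and embed $E = (\Z/p)^k \hookrightarrow \Sym_r$ via the regular representation on $[r]$. Every nontrivial element of $E$ acts on $[r]$ without fixed points, hence with all cycles of length $p$; a fixed simplex $\sigma_1 * \cdots * \sigma_r$ in $(\Delta_N)^{*r}_\Delta$ would require $\sigma_i$ and $\sigma_j$ in the same cycle to be equal, contradicting pairwise disjointness unless both are empty. Therefore $E$ acts freely on the realization of $(\Delta_N)^{*r}_\Delta$. Moreover, $W_r|_E$ is the sum of all $r-1$ nontrivial real characters of $E$, as follows from $\R \oplus W_r \cong \R[\Sym_r/\Sym_{r-1}]$ restricting to the regular representation $\R[E]$; consequently $E$ acts freely on the unit sphere of $W_r^{\oplus(d+2)}$ as well.

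Finally I would invoke Volovikov's lemma: if $E = (\Z/p)^k$ acts freely on a paracompact space $X$ with $\widetilde H^i(X;\mathbb{F}_p) = 0$ for $0 \le i \le n$, then there is no $E$-equivariant map from $X$ to an $E$-free mod-$p$ cohomology $n$-sphere. Applied with $X = (\Delta_N)^{*r}_\Delta$ and $n = N-1$, this rules out the equivariant map into $S(W_r^{\oplus(d+2)})$. The hardest part of the plan is precisely this last step, which amounts to showing that the relevant primary obstruction is nonzero, equivalently that the Euler class of $W_r^{\oplus(d+2)}$ in $H^N(BE;\mathbb{F}_p)$ is nonzero. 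This reduces to a computation in $H^*(BE;\mathbb{F}_p)$: each nontrivial character of $E$ contributes a nonzero linear Chern/Stiefel-Whitney class, and the product of these $r-1$ classes raised to the $(d+2)$-nd power is a nonzero top class, confirming both the $E$-freeness of the target sphere and the non-triviality of the obstruction.
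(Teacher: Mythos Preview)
The paper does not supply its own proof of this lemma; it is quoted from \"Ozaydin and used as a black box. Your route via restriction to an elementary abelian $p$-subgroup and an application of Volovikov's lemma is the standard one, and indeed the paper itself states Volovikov's lemma (Lemma~\ref{lem:volovikov}) and uses it in exactly this fashion in the proof of Theorem~\ref{thm:nonshellable}.

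There is, however, a genuine error. You assert that $E = (\Z/p)^k$ acts \emph{freely} on $S(W_r^{\oplus(d+2)})$, and the version of Volovikov's lemma you state requires freeness on the target. This is false as soon as $k \ge 2$: a nontrivial $g \in E$ of order~$p$ acts on $[r]$ with $r/p = p^{k-1}$ cycles, so its fixed subspace in the permutation module $\R^r$ has dimension $p^{k-1}$, and hence its fixed subspace in $W_r$ has dimension $p^{k-1}-1 > 0$. Thus $g$ fixes a positive-dimensional subsphere of $S(W_r^{\oplus(d+2)})$. (Relatedly, ``the sum of all $r-1$ nontrivial real characters of~$E$'' is only accurate for $p=2$; for odd~$p$ the nontrivial complex characters pair up into $(r-1)/2$ two-dimensional real irreducibles.)

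The repair is straightforward: the correct hypothesis in Volovikov's lemma is that the actions be \emph{fixed-point free}, not free, and this does hold on the target. Indeed $W_r$ contains no trivial $E$-subrepresentation, since the unique $E$-fixed line in $\R^r \cong \R[E]$ is the diagonal, which is precisely the complement of~$W_r$. With ``free'' replaced by ``fixed-point free'' throughout, your outline goes through, and the Euler-class computation in $H^*(BE;\mathbb{F}_p)$ you sketch at the end is exactly what underlies the nonexistence statement.
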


We can use Lemma~\ref{lem:no-equiv-map} as a ``blackbox result'' and the map whose existence is guaranteed by 
Lemma~\ref{lem:constraint} acts as a constraint function. The balanced case of the AP conjecture easily follows. 
In addition to providing a significantly simplified approach to this result, our proof shows that the ``indirect methods'',
compare Joji\'c, Vre\'cica, and \v Zivaljevi\'c~\cite[Section 1.2]{jojic2015}, are sufficiently strong to prove the balanced case 
of the AP conjecture, which settles a problem raised in~\cite{jojic2015}. Very general Tverberg-type results that can be 
deduced from indirect methods can be found in~\cite{frick2016}.

\begin{theorem}[Joji\'c, Vre\'cica, and \v Zivaljevi\'c~\cite{jojic2015}]
\label{thm:balanced}
	Let $r$ be a prime power and $d_1, \dots, d_r$ be balanced with $\sum d_i = (r-1)d$. Then $(d_1, \dots, d_r)$ is Tverberg prescribable;
	every continuous map $\Delta_{(r-1)(d+2)} \longrightarrow \R^d$ identifies points from $r$ pairwise disjoint faces of respective dimensions
	$d_1, \dots, d_r$.
\end{theorem}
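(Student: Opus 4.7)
The plan is to apply the constraint method on the configuration space $(\Delta_N)^{*r}_\Delta$ with $N = (r-1)(d+2)$, feeding the map $\Phi$ from Lemma~\ref{lem:constraint} into the standard topological Tverberg framework and appealing to Lemma~\ref{lem:no-equiv-map} as a black box. Let $f\colon \Delta_N \longrightarrow \R^d$ be continuous; the goal is to produce $r$ pairwise disjoint faces of prescribed dimensions $d_1, \dots, d_r$ whose $f$-images share a common point.

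First I would build the standard $\Sym_r$-equivariant ``test map'' $F\colon (\Delta_N)^{*r}_\Delta \longrightarrow W_r^{\oplus (d+1)}$ associated to $f$: setting $\tilde f(x) = (f(x),1) \in \R^{d+1}$, send a point $\lambda_1 x_1 \oplus \cdots \oplus \lambda_r x_r$ of the deleted join to the component of $(\lambda_1 \tilde f(x_1), \ldots, \lambda_r \tilde f(x_r)) \in (\R^{d+1})^r$ orthogonal to the diagonal. A zero of $F$ forces $\lambda_1 = \cdots = \lambda_r = 1/r$ and $f(x_1) = \cdots = f(x_r)$, i.e., a Tverberg partition of $f$ on pairwise disjoint faces.

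Next I would form the combined equivariant map
\[
\Psi = (F, \Phi) \colon (\Delta_N)^{*r}_\Delta \longrightarrow W_r^{\oplus (d+1)} \oplus W_r \;\cong\; W_r^{\oplus (d+2)}.
\]
Assume for contradiction that no Tverberg partition of $f$ uses faces of the prescribed dimensions. I would then verify that $\Psi$ has no zero: any zero is simultaneously a Tverberg partition and a point of $\Sigma_{N+1,r}^{d_1+1,\dots,d_r+1} = \Phi^{-1}(0)$, so its supporting faces $\sigma_1, \ldots, \sigma_r$ are pairwise vertex-disjoint with $\dim \sigma_i \le d_{\pi(i)}$ for some $\pi \in \Sym_r$. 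Since $\sum_i (d_i+1) = (r-1)d + r \le N+1$, there are enough unused vertices to enlarge each $\sigma_i$ to a face $\tilde\sigma_i$ of exact dimension $d_{\pi(i)}$ while keeping the $\tilde\sigma_i$ pairwise vertex-disjoint; the Tverberg point still lies in $\bigcap_i f(\tilde\sigma_i)$, contradicting the assumption.

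Finally, a nowhere-zero $\Sym_r$-equivariant map $\Psi$ normalizes to an equivariant map $(\Delta_N)^{*r}_\Delta \longrightarrow S(W_r^{\oplus (d+2)})$, which is forbidden by Lemma~\ref{lem:no-equiv-map} since $r$ is a prime power and $N = (r-1)(d+2)$. Hence $\Psi$ must vanish somewhere, producing the desired Tverberg partition. The one part that needs genuine care is the enlargement step—confirming that a zero really yields faces of the exact prescribed dimensions rather than merely dimensions bounded by $d_{\pi(i)}$—but the inequality $\sum(d_i+1) \le N+1$ leaves sufficient slack, and the balanced hypothesis itself is used only inside Lemma~\ref{lem:constraint} via its matrix counting argument, not in this final extension.
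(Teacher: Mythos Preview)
Your proof is correct and follows essentially the same route as the paper's: build the standard test map into $W_r^{\oplus(d+1)}$, pair it with $\Phi$ from Lemma~\ref{lem:constraint} to land in $W_r^{\oplus(d+2)}$, and invoke Lemma~\ref{lem:no-equiv-map} for the contradiction. You are slightly more careful than the paper in spelling out the enlargement step---the paper simply asserts that a zero in $\Sigma_{N+1,r}^{d_1+1,\dots,d_r+1}$ yields points in faces ``of dimensions $d_1,\dots,d_r$'' without explaining how to pass from $\dim\sigma_i \le d_{\pi(i)}$ to equality---so your explicit vertex count $\sum(d_i+1) = (r-1)d + r \le N+1$ is a welcome clarification rather than a deviation.
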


\begin{proof}
	Let $N = (r-1)(d+2)$ and let $f\colon \Delta_N \longrightarrow \R^d$ be continuous. Suppose that for any $r$ pairwise disjoint 
	faces $\sigma_1, \dots, \sigma_r$ of~$\Delta_N$ with dimensions $\dim \sigma_i = d_i$ we have that 
	$f(\sigma_1) \cap \dots \cap f(\sigma_r) = \emptyset$. Then the $\Sym_r$-equivariant map 
	$F\colon (\Delta_N)^{*r}_\Delta \longrightarrow (\R^{d+1})^r, \lambda_1x_1+ \dots + \lambda_rx_r \mapsto (\lambda_1, \lambda_1f(x_1), \dots, \lambda_r, \lambda_rf(x_r))$
	avoids the diagonal $D = \{(y_1,\dots, y_r) \in (\R^{d+1})^r \: | \: y_1 = \dots = y_r\}$ on~$\Sigma_{N+1,r}^{d_1+1, \dots, d_r+1}$. 
	This is because a point $ \lambda_1x_1+ \dots + \lambda_rx_r \in \Sigma_{N+1,r}^{d_1+1, \dots, d_r+1}$ with $F(\lambda_1x_1+ \dots + \lambda_rx_r) \in D$
	would imply $\lambda_1 = \lambda_2 = \dots = \lambda_r$ and thus $f(x_1) = f(x_2) = \dots = f(x_r)$, where by definition of the complex
	$\Sigma_{N+1,r}^{d_1+1, \dots, d_r+1}$ the $x_i$ are contained in pairwise disjoint faces of dimensions $d_1, \dots, d_r$. However, $f$ does not map
	$r$ points from pairwise disjoint faces of those dimensions to the same point. Thus by orthogonal
	projection to $D^\perp$ the map~$F$ induces an $\Sym_r$-map $\overline{F}\colon (\Delta_N)^{*r}_\Delta \longrightarrow W_r^{\oplus (d+1)}$ avoiding the
	origin on~$\Sigma_{N+1,r}^{d_1+1, \dots, d_r+1}$. Using the map $\Phi$ in Lemma~\ref{lem:constraint} we obtain an $\Sym_r$-map
	$(\Delta_N)^{*r}_\Delta \longrightarrow W_r^{\oplus(d+2)}, x \mapsto (\overline{F}(x), \Phi(x))$ that avoids the origin. Radially projecting to the unit sphere
	yields a contradiction to Lemma~\ref{lem:no-equiv-map}.
\end{proof}

For every $r \ge 2$ there is at least one Tverberg admissible $r$-tuple that is Tverberg prescribable:

\begin{corollary}
	Let $r \ge d \ge 2$ be integers. Then the $r$-tuple $(d-1, \dots, d-1, d, \dots, d)$ that contains $d$ times the entry $d-1$ is Tverberg prescribable.
\end{corollary}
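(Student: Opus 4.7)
The plan is to reduce the claim to the prime-power case of Theorem~\ref{thm:balanced} by embedding the given $r$-tuple into a larger tuple whose length is a prime power, applying the theorem there, and then discarding superfluous faces. The key observation is that the given $r$-tuple is balanced (its entries differ by at most one) and admissible: $d(d-1)+(r-d)d=(r-1)d$, and $d-1\ge\lfloor d/2\rfloor$ for $d\ge 2$, so Theorem~\ref{thm:balanced} becomes the natural tool.

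First I would choose a prime power $r'\ge r$ (for instance the smallest power of $2$ exceeding $r$). Consider the $r'$-tuple $\mathbf{d}'=(d-1,\dots,d-1,d,\dots,d)$ containing $d$ copies of $d-1$ and $r'-d$ copies of $d$. Since $r'\ge r\ge d$, this is a balanced tuple, and
\[
\sum_{i=1}^{r'} d'_i \;=\; d(d-1)+(r'-d)d \;=\; (r'-1)d,
\]
so $\mathbf{d}'$ is a balanced Tverberg admissible $r'$-tuple for $\R^d$. By Theorem~\ref{thm:balanced}, every continuous map $f\colon\Delta_{N'}\longrightarrow\R^d$ with $N'=(r'-1)(d+2)$ admits $r'$ pairwise disjoint faces $\sigma_1,\dots,\sigma_{r'}$ of respective dimensions $d-1,\dots,d-1,d,\dots,d$ whose images share a common point.

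Given such a map, the final step is simply to forget $r'-r$ of the faces of dimension $d$: retain all $d$ faces of dimension $d-1$ and any $r-d$ of the $r'-d$ faces of dimension $d$. The resulting collection of $r$ pairwise disjoint faces still has a common image point under $f$ and realizes exactly the prescribed dimension pattern $(d-1,\dots,d-1,d,\dots,d)$ with $d$ copies of $d-1$. Taking $N=N'$ as the required simplex dimension, this verifies that the $r$-tuple is Tverberg prescribable.

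There is essentially no obstacle beyond selecting the auxiliary prime power, which exists trivially (powers of $2$ suffice). The only subtle point is checking that the enlarged tuple is still balanced, which forces us to enlarge only by appending copies of $d$ (not $d-1$); this is why the hypothesis $r\ge d$ is needed: it guarantees $r'-d\ge 0$ so that $\mathbf{d}'$ has the correct form, and it ensures the existing copies of $d-1$ need not be touched when passing between $r$ and $r'$.
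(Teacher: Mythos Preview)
Your proposal is correct and follows essentially the same approach as the paper: pick a prime power $q\ge r$, apply Theorem~\ref{thm:balanced} to the balanced $q$-tuple $(d-1,\dots,d-1,d,\dots,d)$ with $d$ copies of~$d-1$, and then discard $q-r$ of the $d$-dimensional faces. The paper's proof is just a terser version of what you wrote.
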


\begin{proof}
	Let $q \ge r$ be a prime power. By Theorem~\ref{thm:balanced} the $q$-tuple $(d-1, \dots, d-1, d, \dots, d)$ that contains $d$ 
	times the entry $d-1$ is Tverberg prescribable. We can disregard the last $q-r$ faces.
\end{proof}

We remark that for $r$ not a prime power even constant $r$-tuples need not be Tverberg prescribable; see~\cite[Theorem 4.2]{blagojevic2015}.

\section{Connectivity bounds for symmetric multiple chessboard complexes}

Here we show how our geometric constructions of the previous section provide connectivity bounds from above for certain natural symmetric 
simplicial complexes --- symmetric multiple chessboard complexes. 
For positive integers $n$ and $m$ the simplicial complexes
$\Delta(m,n) = [m]^{*n}_\Delta = \Delta(n,m)$, that is, the $n$-fold deleted join of a set of $m$ discrete points, 
are called \emph{chessboard complexes}; $\Delta(m,n)$ is the matching complex of the complete
graph~$K_{m,n}$, that is, the vertices are in bijective correspondence with the edges of~$K_{m,n}$ and faces consist of sets of edges that do
not share vertices. Another description identifies the faces with the positions on an $m$-by-$n$ chessboard, and faces consist of all non-taking rook
placements on the chessboard. These complexes naturally appear in the investigation of several algebraic and combinatorial problems
and thus their combinatorial and topological properties have been of major interest; see Jonsson~\cite{jonsson2008} and Wachs~\cite{wachs2003}
for an overview of the vast literature. Here we mention in particular lower bounds for the connectivity of 
chessboard complexes due to Bj\"orner, Lov\'asz, Vre\'cica, and \v Zivaljevi\'c~\cite{bjorner1994}, upper bounds for their connectivity
due to Shareshian and Wachs~\cite{shareshian2007}, and their shellability for those parameters where they are maximally connected
due to Ziegler~\cite{ziegler1994}.

Chessboard complexes are the appropriate configuration spaces for several Tverberg-type problems; see in particular Vre\'cica and 
\v Zivaljevi\'c~\cite{zivaljevic1992, vrecica1994, vrecica2011} and Blagojevi\'c, Matschke, and Ziegler~\cite{blagojevic2009}.
A natural generalization of chessboard complexes that arises naturally for Tverberg-type results was studied by Joji\'c, Vre\'cica, and \v Zivaljevi\'c~\cite{jojic2015}:
the \emph{multiple chessboard complex} $\Delta_{m,n}^{k_1, \dots, k_n}$ has as
vertex set the squares of an $m$-by-$n$ chessboard and a face for any set of rooks with at most one rook per row and at most
$k_i$ rooks in column~$i$. Another way to represent the complex $\Delta_{m,n}^{k_1, \dots, k_n}$ is as the deleted join
of simplex skeleta $(\Delta_{m-1}^{(k_1-1)} * \dots * \Delta_{m-1}^{(k_n-1)})_\Delta$, that is, the join $\sigma_1 * \dots * \sigma_n$ is
a face of $(\Delta_{m-1}^{(k_1-1)} * \dots * \Delta_{m-1}^{(k_n-1)})_\Delta$ if the $\sigma_i$ are pairwise disjoint faces of~$\Delta_{m-1}$
such that $\dim \sigma_i \le k_i-1$. To establish that these complexes have high connectivity, they showed:

\begin{theorem}[Joji\'c, Vre\'cica, and \v Zivaljevi\'c~\cite{jojic2017}]
	For $m \ge k_1+\dots+k_n+n-1$ the complex $\Delta_{m,n}^{k_1, \dots, k_n}$ is shellable.
\end{theorem}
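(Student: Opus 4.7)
The plan is to exhibit an explicit lexicographic shelling of the facets of $\Sigma := \Delta_{m,n}^{k_1,\dots,k_n}$, generalizing Ziegler's shelling of the ordinary chessboard complex $\Delta(m,n) = \Delta_{m,n}^{1,\dots,1}$~\cite{ziegler1994}. First, the hypothesis $m \ge \sum_j k_j + n - 1$ readily implies that $\Sigma$ is pure of dimension $\sum_j k_j - 1$: every facet $F$ saturates each column to its cap $k_j$ and leaves at least $n - 1$ rows empty. I encode each facet $F$ by the row-to-column word $w(F) = (w_1, \dots, w_m) \in \{0, 1, \dots, n\}^m$, where $w_i$ is the column of $F$'s rook in row $i$, and $w_i = 0$ if row $i$ is empty, and order the facets lexicographically using the letter order $1 < 2 < \cdots < n < 0$ (so filled rows beat empty rows, and within filled rows smaller columns come first).

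Next I would compute the restriction set $R(F) = \{v \in F : F \setminus \{v\} \subseteq G \text{ for some facet } G < F\}$. Because every column of $F$ is already saturated at its cap $k_j$, the only single-rook modification of $F$ that produces a facet $F' < F$ is to pull a rook $(i,j) \in F$ upward into a currently empty row $(i',j)$ in the \emph{same} column $j$; any swap to a different column would over-fill that column in $F'$. Hence $R(F)$ equals the set of rooks of $F$ in rows strictly below the topmost empty row of $F$, and in particular the unique facet with $R(F) = \emptyset$ is the lex-smallest one, in which all empty rows sit at the bottom.

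The hard step, and the main obstacle, is verifying the shelling axiom: for every $G < F$ in the order, $R(F) \not\subseteq G$. Let $i$ be the smallest row where $w(G)$ and $w(F)$ disagree; the argument splits into the two cases $w(F)_i = 0$ (so $F$ skips a row that $G$ fills in some column $c$) and $w(F)_i, w(G)_i \in \{1, \dots, n\}$ with $w(G)_i < w(F)_i$ (so $F$ uses a strictly later column at row $i$). In each case, a column-by-column accounting of rooks in rows $\ge i$, using that $F$ and $G$ have identical column totals but disagree at row $i$, shows that some column contains a ``surplus'' rook of $F$ in a row $> i$ at a position not shared with $G$. The hypothesis $m \ge \sum_j k_j + n - 1$, guaranteeing at least $n - 1$ empty rows per facet, is precisely what forces this surplus rook to lie strictly below the topmost empty row of $F$, placing it in $R(F)$ and thereby witnessing $R(F) \not\subseteq G$. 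The delicate bookkeeping in this case analysis is the principal difficulty, as one must simultaneously track all $n$ column populations in rows $\ge i$; the ``$n-1$'' slack in the hypothesis is exactly what prevents a pathological facet in which the surplus rook could hide above every empty row.
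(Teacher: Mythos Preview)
The paper does not prove this theorem; it is quoted from Joji\'c, Vre\'cica, and \v Zivaljevi\'c~\cite{jojic2017} and used as a black box. So there is no ``paper's own proof'' to compare against.

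That said, your proposed argument has a genuine gap: the lexicographic order you describe is \emph{not} a shelling order, already in the classical chessboard case $k_1=\dots=k_n=1$. Your claim that ``the unique facet with $R(F)=\emptyset$ is the lex-smallest one, in which all empty rows sit at the bottom'' is false. Any facet whose rooks fill rows $1,\dots,\sum_j k_j$ (with all empty rows at the bottom) has $R(F)=\emptyset$ by your own computation of~$R(F)$, and there are many such facets whenever $n\ge 2$. Concretely, take $m=3$, $n=2$, $k_1=k_2=1$. The facet $F$ with word $(2,1,0)$, i.e.\ $F=\{(1,2),(2,1)\}$, has its only empty row at the bottom, hence $R(F)=\emptyset$; yet $G=\{(1,1),(2,2)\}$ with word $(1,2,0)$ satisfies $G<F$ in your order. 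Worse, $F\cap G=\emptyset$ and $F\cap\{(1,1),(3,2)\}=\emptyset$, so $F$ meets the union of all earlier facets in the empty set, violating the shelling axiom outright. The same phenomenon recurs for general $k_j$: with $n=2$, $k_1=k_2=2$, $m=5$, the facet $(1,2,1,2,0)$ has $R(F)=\emptyset$ but is not lex-first.

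The underlying issue is that your restriction-set calculation only allows ``same-column upward'' exchanges, because column saturation forbids cross-column single-rook swaps. That correctly identifies $R(F)$, but it also shows that this particular total order cannot be a shelling: the family of facets with all empty rows at the bottom is pairwise unreachable by such exchanges. Ziegler's shelling of $\Delta(m,n)$ and the Joji\'c--Vre\'cica--\v Zivaljevi\'c shelling of $\Delta_{m,n}^{k_1,\dots,k_n}$ use a more delicate ordering (and a more involved verification) precisely to avoid this obstruction; your ``column-by-column accounting in rows $\ge i$'' sketch cannot be completed as stated, because in Subcase~2b (no empty row of $F$ above the first discrepancy) there may simply be no element of $R(F)$ at all.
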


In a subsequent paper Joji\'c, Vre\'cica, and \v Zivaljevi\'c~\cite{jojic2015} had the insight that the balanced case of the AP conjecture for prime powers~$r$
can be resolved by extending (for specific instances) the shelling order to a symmetrized version of~$\Delta_{m,n}^{k_1, \dots, k_n}$: recall that the 
\emph{symmetric multiple chessboard complex} $\Sigma_{m,n}^{k_1, \dots, k_n}$ contains a face $\sigma_1 * \dots * \sigma_n$
if the $\sigma_i$ are pairwise disjoint faces of~$\Delta_{m-1}$ such that there exists a permutation $\pi \in \Sym_n$ with $\dim \sigma_{\pi(i)} \le k_i-1$.

\begin{theorem}[Joji\'c, Vre\'cica, and \v Zivaljevi\'c~\cite{jojic2015}]
	Let $(k_1, \dots, k_n)$ be a balanced sequence of nonnegative integers and $m \ge k_1+\dots+k_n+n-1$. Then 
	$\Sigma_{m,n}^{k_1, \dots, k_n}$ is shellable.
\end{theorem}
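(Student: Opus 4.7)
The plan is to lift the shelling of the asymmetric multiple chessboard complex $\Delta_{m,n}^{k_1,\dots,k_n}$ (the preceding theorem of Joji\'c, Vre\'cica, and \v Zivaljevi\'c) to the symmetric version by exploiting the decomposition
\[
\Sigma_{m,n}^{k_1,\dots,k_n} = \bigcup_{\pi \in \Sym_n} \Delta_{m,n}^{k_{\pi(1)},\dots,k_{\pi(n)}}.
\]
Since $(k_1,\dots,k_n)$ is balanced, there are integers $k$ and $t$ such that exactly $t$ of the $k_i$ equal $k$ and the remaining $n-t$ equal $k+1$. The distinct pieces on the right-hand side are then parameterized by the set $\mathcal{C}$ of $t$-subsets $C\subseteq[n]$, where $C$ marks the columns that are allowed $k$ rooks. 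The goal is to build a global facet order by concatenating the asymmetric shellings of the pieces indexed by $\mathcal{C}$.

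First I would fix a linear order on $\mathcal{C}$ (for instance the reverse lexicographic order). For each $C\in\mathcal{C}$ in that order, append those facets of the block $\Delta_{m,n}^{C}$ that did not already appear in an earlier block, listed in the shelling order of the preceding theorem. For this to have a chance of working I would choose the internal shelling of every block as the \emph{same} lexicographic shelling induced by a single linear order on the $m\times n$ chessboard vertices, so that all intra-block orders are restrictions of one uniform rule. The intra-block shelling condition is then inherited for free from the asymmetric result.

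The nontrivial verification is the inter-block condition: for every facet $F$ of a block $C$ that is new at its stage and every earlier facet $F'$ (coming from an earlier block $C'$ or appearing earlier inside $C$), one must produce a strictly earlier facet $F''$ with $F\cap F'\subseteq F\cap F''\subsetneq F$. Balance is exactly what makes this work. If $C$ and $C'$ differ by a single swap $i\leftrightarrow j$, then $F$ carries $k+1$ rooks in column $i$ and $k$ in column $j$, while $F'$ does the opposite. Removing from $F$ the rook in column $i$ that plays no role in $F\cap F'$ produces a codimension-one face that lies in some block $C''$ containing $i$, and by the choice of the order on $\mathcal{C}$ that block can be made to precede $C$. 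Extending this face to a facet of $C''$ via the fixed lexicographic rule delivers the witness $F''$. For swaps involving several columns one iterates, handling one column of excess at a time, which is possible because the dimension budget $m\ge k_1+\cdots+k_n+n-1=tk+(n-t)(k+1)+n-1$ provides enough unused rows to reinsert a removed rook in any admissible column.

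The main obstacle will be precisely this inter-block step: arranging the order on $\mathcal{C}$ together with the uniform lexicographic shelling so that the witness $F''$ supplied by a single rook swap always lands in a block \emph{before} $C$ in the global order, and so that within its own block it appears before $F$. Concretely, this reduces to showing that the subcomplex $\Delta_{m,n}^{C}\cap\bigcup_{C''<C}\Delta_{m,n}^{C''}$ is pure of codimension one in $\Delta_{m,n}^{C}$ and that its facets occupy an initial segment of the asymmetric shelling. Once this compatibility lemma is established — the technical heart that the two JVZ papers expand at length — the balanced hypothesis, together with the row bound $m\ge\sum k_i+n-1$, makes the single-swap argument above iterate cleanly and yields a shelling of $\Sigma_{m,n}^{k_1,\dots,k_n}$.
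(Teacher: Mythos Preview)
The paper does not prove this theorem. It is quoted as a result of Joji\'c, Vre\'cica, and \v Zivaljevi\'c~\cite{jojic2015}, with no proof supplied here. What the present paper does instead is \emph{bypass} shellability entirely: the point of Lemma~\ref{lem:constraint} is to construct directly an $\Sym_r$-equivariant map $\Phi\colon (\Delta_N)^{*r}_\Delta \to W_r$ whose zero set is exactly the symmetric multiple chessboard complex $\Sigma_{N+1,r}^{d_1+1,\dots,d_r+1}$. That map, combined with \"Ozaydin's obstruction (Lemma~\ref{lem:no-equiv-map}), yields Theorem~\ref{thm:balanced} without ever needing to know that $\Sigma$ is shellable or even highly connected. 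This is precisely the ``indirect method'' the paper advertises as answering the question raised in~\cite{jojic2015}.

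Your proposal is a faithful outline of the original JVZ strategy --- order the asymmetric blocks $\Delta_{m,n}^{C}$ and concatenate their shellings --- and you correctly identify the inter-block compatibility as the crux. But you have not actually carried out that step; you defer it to a ``compatibility lemma'' that you acknowledge is the technical heart of the two JVZ papers. As written this is a plan rather than a proof, and in particular the specific choices you suggest (reverse-lex on $\mathcal{C}$, a single uniform lexicographic vertex order) are not verified to make the witness $F''$ land in an \emph{earlier} block; this is exactly where the JVZ argument is delicate. So your sketch is reasonable as a roadmap of the cited proof, but it is not the approach of the present paper, and it is not yet a proof on its own.
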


Joji\'c, Vre\'cica, and \v Zivaljevi\'c do not address the question whether it is necessary for the shellability 
of $\Sigma_{m,n}^{k_1, \dots, k_n}$ that $(k_1, \dots, k_n)$ is balanced. Here we show that
if a sequence is sufficiently far from balanced, shellability fails. We establish this non-shellability result as
a consequence of the continuous maps constructed in Theorem~\ref{thm:prescribable}. In particular, 
Tverberg-type intersection results and intersection patterns of convex hulls in Euclidean space may be
used to find obstructions for the shellability (and upper bounds for the connectivity) of certain symmetric 
simplicial complexes. We will need the following lemma.

\begin{lemma}[Volovikov~\cite{volovikov1996-2}]
\label{lem:volovikov}
	Let $p$ be a prime and $G = (\Z/p)^n$ an elementary abelian $p$-group. Suppose that $X$ and $Y$ are fixed-point free 
	$G$-spaces such that $\widetilde{H}(X;\Z/p) \cong 0$ for all $i \le n$ and $Y$ is an $n$-dimensional cohomology sphere 
	over~$\Z/p$. Then there does not exist a $G$-equivariant map $X \longrightarrow Y$.
\end{lemma}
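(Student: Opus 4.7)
The plan is to assume for contradiction that a $G$-equivariant map $f\colon X\to Y$ exists and extract a cohomological obstruction from the Borel constructions $X_G := EG\times_G X$ and $Y_G := EG\times_G Y$. Both spaces fiber over $BG$ via the natural projections $\pi_X,\pi_Y$, and $f$ induces $f_G\colon X_G\to Y_G$ commuting with these projections. The whole argument takes place in the Serre spectral sequences of these two fibrations with $\Z/p$-coefficients.

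First I would analyze the $Y_G$ side. Since $Y$ is a mod-$p$ cohomology $n$-sphere, the $G$-action on $H^n(Y;\Z/p)\cong\Z/p$ factors through $\mathrm{Aut}(\Z/p)=(\Z/p)^\times$, which has order $p-1$ coprime to $|G|=p^n$, so the action is trivial and the spectral sequence has untwisted coefficients. For a generator $u\in H^n(Y;\Z/p)$, the transgression $\tau := d_{n+1}(u)\in H^{n+1}(BG;\Z/p)$ is well-defined, and because $\tau$ is hit by this differential, its image under the edge map $\pi_Y^*\colon H^{n+1}(BG;\Z/p)\to H^{n+1}(Y_G;\Z/p)$ is zero. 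On the $X_G$ side, the hypothesis $\widetilde H^i(X;\Z/p)=0$ for $i\le n$ kills all columns $E_2^{\ast,q}$ with $1\le q\le n$, leaving no room for incoming or outgoing differentials at $E_r^{n+1,0}$; therefore the edge map $\pi_X^*\colon H^i(BG;\Z/p)\to H^i(X_G;\Z/p)$ is an isomorphism for $i\le n$ and injective for $i=n+1$. Naturality of the transgression yields $\pi_X^*(\tau) = f_G^*\pi_Y^*(\tau) = 0$, and injectivity forces $\tau=0$ in $H^{n+1}(BG;\Z/p)$.

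The main obstacle is to show conversely that $\tau\ne 0$ whenever $G=(\Z/p)^n$ acts on $Y$ without fixed points. I would first handle the model case in which $Y$ is the unit sphere of an $(n+1)$-dimensional $G$-representation $V$ with no trivial summand: here $\tau$ is literally the mod-$p$ Euler class of $V$, and since $G$ decomposes $V$ into a sum of $n+1$ nontrivial characters, $\tau$ is a product of $n+1$ nonzero first Euler classes in the algebra $H^*(BG;\Z/p)$, hence nonzero. For the general cohomology-sphere case I would invoke Borel--Hsiang localization for elementary abelian $p$-group actions: the condition $Y^G=\emptyset$ makes $H^*_G(Y;\Z/p)$ annihilated by a suitable product of nontrivial line classes in $H^*(BG;\Z/p)$, and comparing this with the structure of the Serre spectral sequence for $Y_G\to BG$ pins down $\tau$ up to a unit as a class of the same shape as in the representation-sphere model. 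Combined with $\tau=0$ from the previous step, this yields the required contradiction.
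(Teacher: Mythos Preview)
The paper does not prove this lemma; it is quoted from Volovikov's paper as a black-box input, so there is no ``paper's own proof'' to compare against. Your argument via the Borel construction and the Serre spectral sequences of $X_G\to BG$ and $Y_G\to BG$ is the standard one and is essentially correct: the vanishing $\widetilde H^i(X;\Z/p)=0$ for $i\le n$ does force $\pi_X^*$ to be injective in degree $n+1$, and naturality of the transgression then gives $\tau=0$.

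One place deserves a little more care. In your ``model case'' you write that $V$ splits into $n+1$ nontrivial characters and that $\tau$ is a product of $n+1$ first Euler classes. This is literally true only for $p=2$; for odd $p$ the nontrivial real irreducibles of $(\Z/p)^n$ are two-dimensional, so the factorization looks different (a product of degree-two classes, one per complex line). A cleaner and uniform way to finish---which is what your localization sketch is pointing at---is: if $\tau=0$ then $d_{n+1}$ vanishes and the spectral sequence for $Y_G\to BG$ collapses, so $H_G^*(Y;\Z/p)$ is a free $H^*(BG;\Z/p)$-module of rank two. But $Y^G=\emptyset$ together with the Borel--Hsiang localization theorem for elementary abelian $p$-groups forces $H_G^*(Y;\Z/p)$ to be $S$-torsion (where $S$ is generated by the nonzero degree-one classes for $p=2$, or their Bocksteins for $p$ odd), a contradiction. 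With this adjustment your proof is complete.
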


\begin{theorem}
\label{thm:nonshellable}
	Let $d_1, \dots, d_r$ be nonnegative integers with $\sum d_i = (r-1)d$ and suppose that there exists a $j$ with $d_j < \frac{r-1}{r}(d-1)$.
	Let $m=(r-1)(d+1)-1$.
	Then for any $n$ and $r$ a power of a prime $\Sigma_{n,r}^{d_1+1, \dots, d_r+1}$ is not $m$-connected.
	In particular, for $n \ge m+2$ the complex $\Sigma_{n,r}^{d_1+1, \dots, d_r+1}$ is $(m+1)$-dimensional and thus not shellable. 
\end{theorem}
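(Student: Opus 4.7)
The plan is to produce an $\Sym_r$-equivariant map from $\Sigma = \Sigma_{n,r}^{d_1+1,\dots,d_r+1}$ to the $m$-dimensional sphere $S(W_r^{\oplus(d+1)})$ and then use Volovikov's Lemma~\ref{lem:volovikov} to forbid such a map once $\Sigma$ is assumed to be $m$-connected. After relabeling, I assume $d_1 \le \dots \le d_r$, so that the bad index $j$ becomes $j=1$ and $d_1 < \frac{r-1}{r}(d-1)$. As input I will take the continuous map from the proof of Theorem~\ref{thm:prescribable},
\[ f(x) = \bigl(g(x),\ \dist(x, \Delta_{n-1}^{(d_1)})\bigr) \in \R^d, \]
where $g\colon \Delta_{n-1} \to \R^{d-1}$ is an affine strong general position map. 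The essential output of that proof, which I will re-use verbatim, is: $f$ cannot identify $r$ points from pairwise disjoint faces at least one of which has dimension $\le d_1$, because the distance coordinate would then vanish on all $r$ preimages, forcing their minimal carriers into $\Delta_{n-1}^{(d_1)}$ and violating the inequality $r(d-1-d_1) \ge d$ supplied by strong general position together with the hypothesis on $d_1$.

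Next I assemble the Sarkaria-type test map
\[ F\colon (\Delta_{n-1})^{*r}_\Delta \to (\R^{d+1})^r, \quad \sum_i \lambda_i x_i \mapsto \bigl(\lambda_1,\lambda_1 f(x_1),\dots,\lambda_r,\lambda_r f(x_r)\bigr), \]
which is $\Sym_r$-equivariant for the diagonal action on the target. The central claim to verify is that $F|_\Sigma$ avoids the thin diagonal $D \subset (\R^{d+1})^r$: a point in $F^{-1}(D) \cap \Sigma$ would force $\lambda_i \equiv 1/r$ and $f(x_1) = \dots = f(x_r)$, with the $x_i$ lying in pairwise disjoint faces $\sigma_i$ whose dimensions satisfy $\dim \sigma_i \le d_{\pi(i)}$ for some $\pi \in \Sym_r$; since $d_{\pi(i)} = d_1$ for some index $i$, we obtain a face of dimension at most $d_1$, which is the configuration ruled out in the previous paragraph. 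Composing $F|_\Sigma$ with orthogonal projection onto $D^\perp \cong W_r^{\oplus(d+1)}$ and then with radial projection yields the desired $\Sym_r$-equivariant map $\bar F\colon \Sigma \to S(W_r^{\oplus(d+1)})$.

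To finish I restrict the action to the elementary abelian subgroup $G = (\Z/p)^k \subset \Sym_r$ (where $r = p^k$) acting on $\{1,\dots,r\}$ by the regular representation. Both $G$-actions are fixed-point free: on $\Sigma$ because an invariance $\sigma_i = \sigma_{g(i)}$ for pairwise disjoint faces forces $\sigma_i = \emptyset$ whenever $g$ acts without color fixed points, and on $S(W_r^{\oplus(d+1)})$ because $W_r$ is a free $G$-representation. Since the sphere has dimension $m = (r-1)(d+1) - 1$ and the hypothetical $m$-connectivity of $\Sigma$ yields $\tilde H_i(\Sigma;\Z/p) = 0$ for $i \le m$, Lemma~\ref{lem:volovikov} rules out $\bar F$, which is the desired contradiction. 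The ``in particular'' statement then falls out: for $n \ge m+2$ the facets of $\Sigma$ use $\sum(d_i+1) = m+2$ vertices and have dimension $m+1$, so $\Sigma$ is pure of dimension $m+1$, and a pure shellable $(m+1)$-complex is homotopy equivalent to a wedge of $(m+1)$-spheres and hence $m$-connected, contradicting the non-$m$-connectedness just established. The main delicate point will be the sharpness of the test-map step: I must extract the contradiction from only the inequality $\dim \sigma_i \le d_{\pi(i)}$ rather than equality, and this works solely because a single face of dimension at most $d_1$ already suffices to collapse the distance coordinate for all $r$ preimages in Theorem~\ref{thm:prescribable}'s argument. A routine but necessary side-check is the freeness of the $G$-action on $\Sigma$, without which the invocation of Volovikov's lemma would not be legitimate.
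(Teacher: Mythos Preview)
Your proof is correct and follows the same route as the paper's: construct the map $f$ from Theorem~\ref{thm:prescribable}, build the $\Sym_r$-equivariant test map $F$ on the deleted join, observe it misses the diagonal on~$\Sigma$, project to $S(W_r^{\oplus(d+1)})$, and contradict Volovikov's lemma after restricting to the elementary abelian subgroup. You are in fact more careful than the paper on one point: the paper invokes Theorem~\ref{thm:prescribable} to say that $f$ has no $r$-fold coincidence among faces of dimensions \emph{exactly} $d_1,\dots,d_r$, but a face of~$\Sigma$ only guarantees $\dim\sigma_i \le d_{\pi(i)}$; you correctly observe that the construction in the proof of Theorem~\ref{thm:prescribable} already excludes any coincidence where a single face has dimension~$\le d_1$, which is precisely what is needed here (and which for small~$n$ cannot be recovered by padding faces up to the prescribed dimensions). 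Your explicit checks of the free $(\Z/p)^k$-action and of purity for the shellability clause are routine but welcome; the paper leaves both implicit.
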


\begin{proof}
	By Theorem~\ref{thm:prescribable} there is a map $f\colon \Delta_{n-1} \longrightarrow \R^d$ such that for any $r$
	pairwise disjoint faces $\sigma_1, \dots, \sigma_r$ of~$\Delta_{n-1}$ with dimensions $\dim \sigma_i = d_i$ the 
	intersection $f(\sigma_1) \cap \dots \cap f(\sigma_r)$ is empty. Then, as in the proof of Theorem~\ref{thm:balanced}, 
	the $\Sym_r$-equivariant map 
	$(\Delta_{n-1})^{*r}_\Delta \longrightarrow (\R^{d+1})^r, \lambda_1x_1+ \dots + \lambda_rx_r \mapsto (\lambda_1, \lambda_1f(x_1), \dots, \lambda_r, \lambda_rf(x_r))$
	avoids the diagonal $D = \{(y_1,\dots, y_r) \in (\R^{d+1})^r \: | \: y_1 = \dots = y_r\}$ on the subcomplex~$\Sigma_{n,r}^{d_1+1, \dots, d_r+1}$. Thus by orthogonal
	projection to $D^\perp$ and radial projection to the unit sphere this induces an $\Sym_r$-map 
	$F\colon \Sigma_{n,r}^{d_1+1, \dots, d_r+1} \longrightarrow S(W_r^{\oplus (d+1)})$. Let $p$ be a prime with $r =p^n$, then 
	$(\Z/p)^n$ embeds into $\Sym_r$ in a natural way (as explained in~\cite{ozaydin1987}) such that the action of $(\Z/p)^n$ has no fixed
	points on~$S(W_r^{\oplus (d+1)})$. So by Lemma~\ref{lem:volovikov} the symmetric multiple chessboard complex
	$\Sigma_{n,r}^{d_1+1, \dots, d_r+1}$ is not $m$-connected, and thus not shellable.
\end{proof}

This shows in particular that Theorem~\ref{thm:3-dim-prescr} is an affine Tverberg-type result that does not follow from
the usual topological approach of showing the nonexistence of an equivariant map from the associated configuration space
to a representation sphere of the symmetric group.

\section{Bourgin--Yang orbit collapsing: affine vs. continuous}

We present a simple construction that exhibits a Tverberg-type result that holds in the affine setting for any intersection multiplicity
and has a continuous relaxation if and only if the intersection multiplicity is a prime. The affine version is an extension of a result
of Sober\'on~\cite{soberon2015}. To state Sober\'on's result we need the following definition: a point $x \in [r]^{*n}$ can be written as 
$x = \lambda_1x_1 + \dots + \lambda_nx_n$ with $x_i \in [r]$, $\lambda_i \ge 0$, and $\sum \lambda_i = 1$; given a second point
$y = \lambda_1y_1 + \dots + \lambda_ny_n$ with the same coefficients but perhaps different $y_i \in [r]$, we say that $x$ and $y$
have \emph{equal barycentric coordinates}.

\begin{theorem}[Sober\'on~\cite{soberon2015}]
\label{thm:barycentric}
	Let $n \ge (r-1)d$ and $f\colon [r]^{*(n+1)} \longrightarrow \R^d$ be affine. Then there are $x_1, \dots, x_r \in [r]^{*(n+1)}$
	with equal barycentric coordinates in pairwise disjoint faces of $[r]^{*(n+1)}$ such that $f(x_1) = \dots = f(x_r)$.
\end{theorem}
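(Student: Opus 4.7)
The plan is to reduce the statement to B\'ar\'any's colorful Carath\'eodory theorem via the standard $\Sym_r$-symmetrization. First I would set $v_{i,c} = f(e_{i,c}) \in \R^d$ for $i \in [n+1]$ and $c \in [r]$, and package these as tuples $V_i = (v_{i,1}, \dots, v_{i,r}) \in (\R^d)^r$. Since $f$ is affine, what we must produce are nonnegative $\lambda_i$ summing to one and permutations $\pi_i \in \Sym_r$ so that $f(x_j) = \sum_i \lambda_i v_{i, \pi_i(j)}$ is the same for all $j \in [r]$, where $x_j = \sum_i \lambda_i e_{i, \pi_i(j)}$; equivalently, letting $\Sym_r$ act on $(\R^d)^r$ by block permutation, we need $\sum_i \lambda_i\,(\pi_i \cdot V_i)$ to lie on the diagonal $D \subset (\R^d)^r$.

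The second step is to orthogonally project onto $D^\perp \cong W_r \otimes \R^d \cong \R^{(r-1)d}$ via a map $p$, and to consider for each $i$ the projected orbit $T_i = \{p(\pi \cdot V_i) : \pi \in \Sym_r\} \subseteq \R^{(r-1)d}$. The key observation is that the centroid of $T_i$ is zero: the average over all $r!$ permutations of $V_i$ is the tuple whose every block equals $\frac{1}{r}\sum_c v_{i,c}$, which lies on $D$. Hence $0 \in \conv(T_i)$ for every $i \in [n+1]$.

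With this in hand I would invoke colorful Carath\'eodory on the $n+1$ color classes $T_1, \dots, T_{n+1} \subseteq \R^{(r-1)d}$: since $n+1 \ge (r-1)d+1$ and each $T_i$ contains $0$ in its convex hull, there exist a subset $I$ of size $(r-1)d+1$, representatives $p(\pi_i \cdot V_i) \in T_i$ for $i \in I$, and coefficients $\lambda_i \ge 0$ with $\sum_{i \in I} \lambda_i = 1$ and $\sum_{i \in I} \lambda_i\, p(\pi_i \cdot V_i) = 0$ (set $\lambda_i = 0$ outside $I$ and choose any $\pi_i$ there). Lifting along $p$, one gets $\sum_i \lambda_i\,(\pi_i \cdot V_i) \in D$, which is exactly $f(x_1) = \dots = f(x_r)$ for the points $x_j = \sum_i \lambda_i e_{i, \pi_i(j)}$; these share the barycentric coordinates $(\lambda_i)$ by construction, and their supports are pairwise disjoint because each $\pi_i$ is a bijection, so $\pi_i(j) \ne \pi_i(k)$ whenever $j \ne k$.

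I do not expect a serious obstacle. The entire argument rests on the elementary symmetry fact that the centroid of a $\Sym_r$-orbit lies on the diagonal, together with the matching of dimensions: the ambient space $\R^{(r-1)d}$ meets exactly the $(r-1)d+1$ color classes that colorful Carath\'eodory demands, which is precisely what the hypothesis $n \ge (r-1)d$ supplies.
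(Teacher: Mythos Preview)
Your argument is correct and follows essentially the same route as the paper: lift $f$ to a map into $(\R^d)^r$, project to the diagonal's orthogonal complement $D^\perp \cong \R^{(r-1)d}$, use the fact that orbit centroids lie on~$D$ to place $0$ in the convex hull of each projected color class, and finish with colorful Carath\'eodory. The one notable difference is that the paper symmetrizes with the cyclic group $\Z/r$ rather than the full symmetric group~$\Sym_r$; this yields the slightly stronger conclusion (Theorem~\ref{thm:aff-orbit}) that an entire $\Z/r$-orbit is collapsed, from which the equal-barycentric-coordinates statement follows. Your $\Sym_r$-version proves Theorem~\ref{thm:barycentric} directly but does not give orbit collapse, since the chosen permutations $\pi_i$ need not all be powers of a single cyclic shift.
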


We have the following isomorphisms of simplicial complexes $[r]^{*(n+1)} \cong (\pt^{*r}_\Delta)^{*(n+1)} \cong (\pt^{*(n+1)})^{*r}_\Delta
\cong (\Delta_n)^{*r}_\Delta$ and thus $[r]^{*(n+1)}$ is the configuration space for Tverberg's theorem.
Sober\'on asked whether there is a topological relaxation of his result. Such a continuous generalization for $r$ a power of
a prime was proven in~\cite{blagojevic2014}. Moreover the proof method in the same way yields a new proof of Theorem~\ref{thm:barycentric}
by reducing it to Tverberg's theorem. Here we first extend Theorem~\ref{thm:barycentric} and then show that this extension
has a topological generalization if and only if the intersection multiplicity $r$ is a prime. We interpret Theorem~\ref{thm:barycentric} as
an approximation of an affine version of Bourgin--Yang orbit collapsing results; for rather general such theorems see Fadell and
Husseini~\cite{fadell1988}. In particular, our topological generalization, Theorem~\ref{thm:top-orbit}, also readily follows from the methods developed there.
Our affine version is an orbit collapsing
result that can be deduced from Sarkaria's linear Borsuk--Ulam theorem~\cite{sarkaria2000}, which itself is a corollary of B\'ar\'any's
colorful Carath\'eodory theorem~\cite{barany1982}. Instead, for ease of exposition, we will directly deduce the affine version from the colorful
Carath\'eodory theorem, which states that if $X_0, \dots, X_d \subseteq \R^d$ satisfy $0 \in \bigcap_i \conv X_i$ then there are
$x_0 \in X_0, \dots, x_d \in X_d$ such that $0 \in \conv\{x_0, \dots, x_d\}$. The group~$\Z/r$ naturally acts on~$[r]$ by shifts,
that is, we can think of~$[r]$ as the group~$\Z/r$ which naturally acts on itself. So $\Z/r$ acts diagonally on the vertex set of
$[r]^{*(n+1)}$ and thus on the complex $[r]^{*(n+1)}$ itself by extending affinely. If $x$ and $y$ are two points from the same 
$\Z/r$-orbit in~$[r]^{*(n+1)}$ then they have equal barycentric coordinates. Moreover, two distinct points from the same $\Z/r$-orbit
are necessarily in disjoint faces. Our proof of the following theorem answers a question of Sober\'on~\cite[Section~4]{soberon2015},
whether there is an orbit-collapsing proof of his result.

\begin{theorem}
\label{thm:aff-orbit}
	Let $r \ge 2$ and $d \ge 1$ be integers. For any integer $n \ge (r-1)d$ 
	and any affine map $f\colon [r]^{*(n+1)} \longrightarrow \R^d$
	there is a $\Z/r$-orbit in $[r]^{*(n+1)}$ that $f$ collapses to one 
	point.
\end{theorem}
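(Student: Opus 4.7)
The plan is to reformulate orbit-collapsing as a vanishing condition in a vector space of dimension $(r-1)d$, and then apply B\'ar\'any's colorful Carath\'eodory theorem, exploiting precisely the hypothesis $n+1 \ge (r-1)d+1$.

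Label the vertices of $[r]^{*(n+1)}$ as $v_{i,j}$ for $i \in \mathbb{Z}/r$ and $j \in \{1,\dots,n+1\}$, with the generator $\sigma$ of $\mathbb{Z}/r$ acting by $\sigma(v_{i,j}) = v_{i+1,j}$. A point $x = \sum_j \lambda_j v_{i_j, j}$ has its orbit collapsed to a single value by the affine map $f$ if and only if
\[
\sum_j \lambda_j f(v_{i_j+k,j}) \;=\; \sum_j \lambda_j f(v_{i_j,j}) \qquad \text{for each } k \in \{1, \dots, r-1\},
\]
which amounts to $(r-1)d$ scalar equations. Encode them in $(\mathbb{R}^d)^{r-1}$: for each column $j$ and each $i \in \mathbb{Z}/r$, set
\[
p_{i,j} \;=\; \bigl(f(v_{i,j}) - f(v_{i+1,j}),\; f(v_{i,j}) - f(v_{i+2,j}),\; \dots,\; f(v_{i,j}) - f(v_{i+r-1,j})\bigr),
\]
and put $S_j = \{p_{i,j} : i \in \mathbb{Z}/r\}$. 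By design, any choice of indices $i_j \in \mathbb{Z}/r$ and coefficients $\lambda_j \ge 0$ with $\sum \lambda_j = 1$ satisfying $\sum_j \lambda_j p_{i_j,j} = 0$ produces an orbit-collapsing point $x = \sum_j \lambda_j v_{i_j,j}$.

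The key structural observation is that for every column $j$ the centroid of $S_j$ is the origin: the $k$-th block of $\sum_{i \in \mathbb{Z}/r} p_{i,j}$ equals $\sum_i (f(v_{i,j}) - f(v_{i+k,j}))$, which vanishes because $i \mapsto i+k$ permutes $\mathbb{Z}/r$. Hence $0 \in \conv(S_j)$ for every $j$. Since $(\mathbb{R}^d)^{r-1}$ has dimension $(r-1)d$ and we have $n+1 \ge (r-1)d + 1$ such configurations, the colorful Carath\'eodory theorem supplies the required $i_j$ and $\lambda_j$ (taking the remaining $\lambda_j$ to be $0$), completing the proof.

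The main creative step, and really the only nontrivial one, is the choice of the encoding $p_{i,j}$: it must bundle all $r-1$ difference vectors of a vertex under the $\mathbb{Z}/r$-action into one element of a $(r-1)d$-dimensional space, so that a colorful selection corresponds exactly to picking a representative in each column of an orbit-collapsing point, while the column-wise summation-to-zero property required by colorful Carath\'eodory falls out automatically from the group-shift symmetry. Once this encoding is set up, the dimension bookkeeping matches the hypothesis $n \ge (r-1)d$ on the nose, and no further ingredients are needed.
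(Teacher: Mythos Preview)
Your proof is correct and follows essentially the same route as the paper's: both set up a $\Z/r$-equivariant map into a $(r-1)d$-dimensional space so that each vertex orbit has centroid at the origin, then invoke the colorful Carath\'eodory theorem on the $n+1 \ge (r-1)d+1$ color classes. The only cosmetic difference is that the paper writes $F(x) = (f(x), f(t\cdot x), \dots, f(t^{r-1}\cdot x)) \in (\R^d)^r$ and orthogonally projects to~$D^\perp$, whereas you use the explicit linear map $(y_0,\dots,y_{r-1}) \mapsto (y_0-y_1,\dots,y_0-y_{r-1})$, which has kernel exactly~$D$ and thus realizes the same quotient.
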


\begin{proof}
	Let $t \in \Z/r$ be a generator. Define $F\colon [r]^{*(n+1)} \longrightarrow (\R^d)^r$ by
	$F(x) = {(f(x), f(t\cdot x), \dots, f(t^{r-1}\cdot x)})$. We need to show that the image of~$F$
	intersects the diagonal $D = \{(y_1, \dots, y_r) \in (\R^d)^r \: | \: {y_1 = y_2 = \dots = y_r}\}$.
	Let $\pi\colon (\R^d)^r \longrightarrow D^\perp$ be the orthogonal projection. For any vertex
	$v$ of $[r]^{*(n+1)}$ the barycenter of the set $\{F(v), F(t\cdot v), \dots, F(t^{r-1}\cdot v)\}$ is
	contained in the diagonal~$D$ and thus $$0 \in {\conv\{\pi(F(v)), \pi(F(t\cdot v)), \dots, \pi(F(t^{r-1}\cdot v))\}}.$$
	for each vertex~$v$. There are $n+1 \ge (r-1)d+1$ of these vertex orbits and $D^\perp$ has 
	dimension~$(r-1)d$. Thus by the colorful Carath\'eodory theorem there is a choice of one 
	vertex per orbit such that the convex hull of these vertices captures the origin. But a choice of
	one vertex per orbit determines a face of~$[r]^{*(n+1)}$, so this proves our claim.
\end{proof}

This extension of Theorem~\ref{thm:barycentric} has a topological generalization if and only if $r$ is a prime.
The proof that Theorem~\ref{thm:aff-orbit} holds if $f$ is merely continuous and $r$ is a prime is essentially
the same; now using a theorem of Dold instead of the colorful Carath\'eodory theorem:

\begin{lemma}[Dold~\cite{dold1983}]
\label{lem:dold}
	Let a non-trivial finite group $G$ act on an $n$-connected CW-complex~$K$ and act linearly on an 
	$(n+1)$-dimensional real vector space~$V$. Suppose that the action of $G$ on $V \setminus \{0\}$ is free.
	Then any $G$-equivariant map $K \longrightarrow V$ has a zero.
\end{lemma}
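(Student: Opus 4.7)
The plan is to proceed by contradiction via an equivariant Euler class obstruction on the Borel construction. First I would reduce to the case $G = \Z/p$ for some prime $p$: since $G$ is non-trivial and finite, it contains a cyclic subgroup $H$ of prime order by Cauchy's theorem, this $H$ still acts freely on $V \setminus \{0\}$, and $K$ remains $n$-connected as an $H$-space. A $G$-equivariant map is in particular $H$-equivariant, so it suffices to rule out $H$-equivariant zero-free maps $K \to V$. Henceforth assume $G = \Z/p$.

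Suppose for contradiction that $f \colon K \to V$ is $G$-equivariant and nowhere zero. Radial projection yields a $G$-equivariant map $g \colon K \to S(V)$, where $S(V) \cong S^n$ carries a free $G$-action. Forming the Borel construction produces the fibration $\pi \colon EG \times_G K \to BG$ with fiber $K$, and $g$ induces a map $EG \times_G K \to EG \times_G S(V) \simeq S(V)/G$; the homotopy equivalence on the right uses freeness of the $G$-action on $S(V)$.

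The argument now hinges on three cohomological facts with $\Z/p$ coefficients. \emph{First}, the Euler class $e \in H^{n+1}(BG; \Z/p)$ of the associated vector bundle $EG \times_G V \to BG$ is nonzero: by the Gysin sequence of the sphere bundle $S(V)/G \to BG$, the vanishing of $H^i(S(V)/G; \Z/p)$ for $i > n$ forces cup product with $e$ to be surjective in degrees above $n$, and since $H^\ast(B\Z/p; \Z/p)$ is nonzero in every nonnegative degree, $e$ itself cannot vanish. \emph{Second}, the Serre spectral sequence of the Borel fibration has $n$-connectedness of $K$ killing rows $1, \ldots, n$ of the $E_2$-page, so no nontrivial differential can touch $E_2^{n+1, 0} = H^{n+1}(BG; \Z/p)$; hence $\pi^\ast \colon H^{n+1}(BG; \Z/p) \to H^{n+1}(EG \times_G K; \Z/p)$ is injective. \emph{Third}, the equivariant map $g$ defines a nowhere-vanishing section of the pulled-back vector bundle $\pi^\ast(EG \times_G V) \to EG \times_G K$, which forces the pullback Euler class $\pi^\ast e$ to vanish.

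Combining the second and third facts forces $e = 0$, contradicting the first and completing the proof. The main obstacle is the nonvanishing of the Euler class with appropriate coefficients; the reduction to cyclic subgroups of prime order at the outset is exactly what makes this step clean, since $B\Z/p$ has nonzero mod-$p$ cohomology in every degree and the Gysin argument produces $e$ directly. For general non-trivial finite $G$ acting freely on a sphere one could instead work with orientation-twisted integer coefficients on $BG$, but reducing to prime-cyclic subgroups avoids this subtlety entirely.
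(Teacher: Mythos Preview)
The paper does not prove this lemma; it is quoted as a known result of Dold and used as a black box. So there is no ``paper's proof'' to compare against.

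Your argument is correct and is one of the standard routes to Dold's theorem. The reduction to a prime-cyclic subgroup via Cauchy is fine, and the three ingredients you isolate are exactly what is needed: nonvanishing of the mod-$p$ Euler class of the Borel vector bundle (extracted from the Gysin sequence using that $S(V)/G$ is an $n$-manifold and that $H^*(B\Z/p;\Z/p)$ is nonzero in every degree), injectivity of $\pi^*$ in degree $n+1$ from the Serre spectral sequence of the Borel fibration (the $n$-connectivity of $K$ kills the relevant rows), and the vanishing of $\pi^*e$ forced by the equivariant nowhere-zero section. The contradiction is clean.

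One minor remark: Dold's own 1983 argument is phrased a bit differently (via the transfer and degree/index considerations rather than the Euler class on the Borel construction), but the two are equivalent in spirit and both hinge on the same equivariant obstruction. Your write-up would benefit from stating explicitly that $S(V)/G$ is a closed $n$-manifold (so its mod-$p$ cohomology vanishes above degree $n$), since that is the input to the Gysin step; otherwise the proof is complete as written.
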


\begin{theorem}
\label{thm:top-orbit}
	Let $r$ be a prime and $d \ge 1$ an integer. For any integer $n \ge (r-1)d$ 
	and any continuous map $f\colon [r]^{*(n+1)} \longrightarrow \R^d$
	there is a $\Z/r$-orbit in $[r]^{*(n+1)}$ that $f$ collapses to one 
	point.
\end{theorem}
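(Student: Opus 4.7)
The plan is to adapt the argument of Theorem~\ref{thm:aff-orbit}, replacing the appeal to the colorful Carath\'eodory theorem with Dold's theorem (Lemma~\ref{lem:dold}); the primality of $r$ will enter precisely through the freeness hypothesis of Dold's theorem.

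First I would fix a generator $t \in \Z/r$ and define the $\Z/r$-equivariant map $F\colon [r]^{*(n+1)} \longrightarrow (\R^d)^r$ by $F(x) = (f(x), f(t\cdot x), \dots, f(t^{r-1}\cdot x))$. A $\Z/r$-orbit is collapsed to a single point by $f$ precisely when the image of $F$ meets the diagonal $D = \{(y,\dots,y) \in (\R^d)^r\}$. Composing $F$ with the orthogonal projection $\pi\colon (\R^d)^r \longrightarrow D^\perp$ therefore produces a $\Z/r$-equivariant map $\overline{F}\colon [r]^{*(n+1)} \longrightarrow D^\perp$, and it suffices to show that $\overline F$ has a zero.

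Next I would verify the hypotheses of Lemma~\ref{lem:dold} with $G = \Z/r$, $K = [r]^{*(n+1)}$, and $V = D^\perp$. As a $\Z/r$-representation $D^\perp$ is the direct sum of $d$ copies of the standard $(r-1)$-dimensional representation, so $\dim V = (r-1)d$; since $r$ is prime, every nontrivial power $t^k$ has fixed subspace equal to $D$, so the action on $V\setminus\{0\}$ is free. On the other hand $[r]^{*(n+1)}$, the $(n+1)$-fold join of nonempty discrete sets, is homotopy equivalent to a wedge of $n$-spheres and hence $(n-1)$-connected, so the hypothesis $n \ge (r-1)d$ guarantees that $K$ is at least $((r-1)d-1)$-connected. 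Dold's theorem then forces $\overline F$ to vanish somewhere, producing $x \in [r]^{*(n+1)}$ with $f(x) = f(t\cdot x) = \dots = f(t^{r-1}\cdot x)$, which is the desired collapsed $\Z/r$-orbit.

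I do not expect any serious technical obstacle, as the setup is essentially a continuous rephrasing of the proof of Theorem~\ref{thm:aff-orbit}. The one place where care is required is the identification of $D^\perp$ as a $\Z/r$-representation and the confirmation that the action is free off $\{0\}$; this is exactly where the primality of $r$ is used, and it is also what obstructs an extension of the argument to composite $r$, in line with the failure recorded by Theorem~\ref{thm:top-orbit-fails}.
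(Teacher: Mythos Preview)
Your proposal is correct and follows essentially the same approach as the paper: repeat the equivariant setup $\pi\circ F$ from Theorem~\ref{thm:aff-orbit}, note that $[r]^{*(n+1)}$ is $(n-1)$-connected, that the $\Z/r$-action on $D^\perp\cong W_r^{\oplus d}$ is free off the origin for $r$ prime, and apply Lemma~\ref{lem:dold} in place of the colorful Carath\'eodory theorem. Your write-up supplies a bit more detail on the connectivity and the freeness verification than the paper does, but the argument is the same.
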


\begin{proof}
	The complex $[r]^{*(n+1)}$ is $(n-1)$-connected. Repeat the proof of Theorem~\ref{thm:aff-orbit}.
	The map $\pi\circ F$ is $\Z/r$-equivariant and for $r$ a prime the action on $D^\perp$ is free away from
	the origin. Thus an application of Lemma~\ref{lem:dold} instead of the colorful Carath\'eodory theorem
	finishes the proof.
\end{proof}

In fact, in a similar way one can prove strengthenings of Theorem~\ref{thm:top-orbit}: let $C_{2r}$ be a circle
on $2r$ vertices, and let $t$ be a generator of~$\Z/r$. Define a $\Z/r$-action on $C_{2r}$ by $t$ rotating the
circle by two vertices, so that there are two disjoint orbits of vertices. Then $C_{2r}$ equivariantly embeds 
into~$[r]^{*2}$. Now suppose that $n \ge (r-1)d$ for some prime~$r$ and that $n+1$ is even, say ${n+1=2k}$.
Then $C_{2r}^{*k}$ is a subcomplex of $[r]^{*(n+1)}$, which is a proper subcomplex for $r\ge 3$. Now since
$C_{2r}^{*k}$ is homeomorphic to $S^{2k-1}$ it is $(n-1)$-connected, and thus, as before, for any continuous 
map $f\colon C_{2r}^{*k} \longrightarrow \R^d$ there is a $\Z/r$-orbit in $C_{2r}^{*k}$ that $f$ collapses to one 
point. The affine analog of this result for multiplicities $r$ with at least two distinct prime divisors is an open
problem.

That Theorem~\ref{thm:top-orbit} does not hold for composite numbers~$r$ is a simple consequence of 
the existence of certain equivariant maps --- first constructed by \"Ozaydin~\cite{ozaydin1987}. He showed that for composite~$r$
and $n \le (r-1)d$ there exists a $\Z/r$-equivariant map $(\Delta_n)^{*r}_\Delta \cong  [r]^{*(n+1)} \longrightarrow S(W_r^{\oplus d})$.
This was recently extended to arbitrary $n$ by Basu and Ghosh~\cite{basu2015} provided that $r$ is not a power of a prime.

\begin{theorem}
\label{thm:top-orbit-fails}
	Let $r \ge 4$ be a composite number and let $d \ge 1$ be an integer. 
	For $n = (r-1)d$ there exists a continuous map 
	$f\colon [r]^{*(n+1)} \longrightarrow \R^d$ that does not
	collapse any $\Z/r$-orbit in $[r]^{*(n+1)}$ to one point.
\end{theorem}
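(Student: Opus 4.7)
The plan is to invert the construction used in the proof of Theorem~\ref{thm:aff-orbit}: given any continuous $f\colon [r]^{*(n+1)} \longrightarrow \R^d$, one forms $F(x) = (f(x), f(tx), \dots, f(t^{r-1}x))$ and its projection $\pi \circ F\colon [r]^{*(n+1)} \longrightarrow D^\perp \cong W_r^{\oplus d}$ onto the orthogonal complement of the diagonal. The statement ``$f$ collapses the $\Z/r$-orbit of $x$'' is exactly ``$F(x) \in D$'', equivalently ``$\pi \circ F(x) = 0$''. Hence constructing an $f$ that collapses no orbit amounts to exhibiting an $f$ whose induced $\Z/r$-equivariant map $\pi \circ F$ into $W_r^{\oplus d}$ never vanishes.

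The required ingredient has already been supplied in the paragraph preceding the theorem: \"Ozaydin's construction produces, for composite $r$ and $n = (r-1)d$ (which lies in the range $n \le (r-1)d$), a $\Z/r$-equivariant map
\[
H\colon [r]^{*(n+1)} \longrightarrow S(W_r^{\oplus d}).
\]
I would identify $W_r^{\oplus d}$ with $D^\perp = \{(y_1,\dots,y_r) \in (\R^d)^r \: | \: \sum_i y_i = 0\}$ and write $H(x) = (H_1(x), \dots, H_r(x))$ with $H_i(x) \in \R^d$. Fixing conventions so that $t \in \Z/r$ acts on $(\R^d)^r$ by the cyclic shift $(y_1, \dots, y_r) \mapsto (y_2, \dots, y_r, y_1)$, equivariance of $H$ reads $H_i(tx) = H_{i+1}(x)$. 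Then I would define
\[
f(x) := H_1(x),
\]
which is continuous because $H$ is.

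To verify the construction, I would simply unwind the equivariance: $f(t^i x) = H_1(t^i x) = H_{i+1}(x)$, so
\[
F(x) = (f(x), f(tx), \dots, f(t^{r-1}x)) = H(x)
\]
for every $x \in [r]^{*(n+1)}$. Since $H(x) \in S(W_r^{\oplus d}) \subset D^\perp$ is nonzero and orthogonal to $D$, we have $F(x) \notin D$, i.e., $f$ collapses no $\Z/r$-orbit. The only genuine difficulty is the existence of the map $H$ itself, which is precisely the content of \"Ozaydin's result (extended to larger $n$ by Basu and Ghosh when $r$ is not a prime power). Once $H$ is granted, the rest of the argument is routine bookkeeping with the cyclic action; no further topological input is needed.
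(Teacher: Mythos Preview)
Your proposal is correct and is essentially identical to the paper's proof: both take \"Ozaydin's $\Z/r$-equivariant map into $W_r^{\oplus d} \cong D^\perp \subset (\R^d)^r$ and define $f$ to be its projection onto the first $\R^d$-factor, then observe that the induced $F$ recovers the original map and hence misses the diagonal. You supply a bit more detail in verifying $F(x) = H(x)$ via the equivariance relation, but the argument is the same.
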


\begin{proof}
	The representation $W_r^{\oplus d}$ embeds into $(\R^d)^r$ as the orthogonal complement $D^\perp$
	of the diagonal~$D$. Thus \"Ozaydin's construction yields a $\Z/r$-equivariant map 
	$[r]^{*(n+1)} \longrightarrow (\R^d)^r$ that avoids the diagonal. The projection of this map onto the 
	first $\R^d$-factor is the map~$f$.
\end{proof}

If we require $r$ to have at least two distinct prime divisors then the result of Basu and Ghosh~\cite{basu2015}
shows the existence of an $f\colon [r]^{*(n+1)} \longrightarrow \R^d$ as in the theorem above for any~$n$.
In fact, if $r$ is not a prime, and $c$ is a constant then for $d$ sufficiently large there is
a $\Z/r$-map (and even $\Sym_r$-map) $[r]^{*(n+1)} \longrightarrow S(W_r^{\oplus d})$ for $n = (r-1)d + c$, see~\cite[Section~5]{blagojevic2015}.
Thus the theorem above also holds in those situations.


\small

\end{document}